\date{}
\renewcommand{\uppercasenonmath}[1]{}
\theoremstyle{plain}
\newtheorem{theorem}{Theorem}[section]
\newtheorem{proposition}[theorem]{Proposition}
\newtheorem{corollary}[theorem]{Corollary}
\theoremstyle{definition}
\newtheorem{definition}[theorem]{Definition}
\newtheorem{question}[theorem]{Open Question}
\theoremstyle{definition}
\newtheorem*{acknowledgement}{Acknowledgement}
\theoremstyle{remark}
\newtheorem{remark}[theorem]{Remark}
\newcommand{\pf}{\noindent\begin {proof}}
\newcommand{\epf}{\end{proof}}
\newcommand{\Ker}{\mbox{\rm Ker}}
\newcommand{\Ext}{\mbox{\rm Ext}}
\newcommand{\Hom}{\mbox{\rm Hom}}
\newcommand{\Tor}{\mbox{\rm Tor}}
\newcommand{\Prufer}{Pr\"{u}fer}
\def\GV{{\rm GV}}
\def\Hom{{\rm Hom}}
\def\Ext{{\rm Ext}}
\def\Tor{{\rm Tor}}
\def\fkm{{\frak m}}
\def\ker{{\rm ker}}
\def\Ker{{\rm Ker}}
\def\Nil{{\rm Nil}}
\def\NN{{\rm NN}}
\def\NP{{\rm NP}}
\def\Z{{\rm Z}}
\def\T{{\rm T}}
\def\GV{{\rm GV}}
\def\Max{{\rm Max}}
\def\DW{{\rm DW}}
\def\ZN{{\rm ZN}}
\def\PvMR{{\rm PvMR}}
\def\PvMD{{\rm PvMD}}
\def\SM{{\rm SM}}
\def\Krull{{\rm Krull}}
\def\p{{\frak p}}
\begin{document}
\begin{center}
{\large  \bf On strongly $\phi$-$w$-Flat modules  and Their Homological Dimensions}

\vspace{0.5cm}  Wei Qi$^{a}$, Xiaolei Zhang$^{a}$,   Wei Zhao$^{b}$\\

{\footnotesize a.\  School of Mathematics and Statistics, Shandong University of Technology, Zibo 255000, China\\

b.\ School of Mathematics, ABa Teachers University, Wenchuan 623002, China

E-mail: zxlrghj@163.com\\}
\end{center}

\bigskip
\centerline { \bf  Abstract}
\bigskip
\leftskip10truemm \rightskip10truemm \noindent

In this paper,  we introduce and study the notion of strongly $\phi$-$w$-flat modules. The $\phi$-$w$-weak global dimension $\phi$-$w$-w.gl.dim$(R)$ of an $\NP$-ring $R$ is also introduced and studied. We characterize $\phi$-\Prufer\ $v$-multiplication strongly $\phi$-rings in terms of strongly $\phi$-$w$-flat modules.
\vbox to 0.3cm{}\\
{\it Key Words:} strongly $\phi$-$w$-flat module; $\phi$-$w$-weak global dimension; $\phi$-$\PvMR$s.\\
{\it 2020 Mathematics Subject Classification:}  13A15; 13F05.

\leftskip0truemm \rightskip0truemm
\bigskip

Throughout this paper, all rings are commutative rings with $1\not= 0$ and all modules are unitary. Let $R$ be a ring. We denote by $\Nil(R)$  the nilpotent radical of $R$, $\Z(R)$ the set of all zero-divisors of $R$ and $\T(R)$ the localization of $R$ at the set of all regular elements. The $R$-submodules $I$ of $\T(R)$ such that $sI\subseteq R$ for some regular element $s$ are said to be \emph{fractional ideals}. Let $I$ be fractional ideal of $R$, we denote by  $I^{-1}=\{x\in T(R)|Ix\subseteq R\}$. An ideal $I$ of $R$ is said to be  nonnil provided that there is a non-nilpotent element in  $I$.  Denote by $\NN(R)$ the set of all nonnil ideals of $R$.

Recall from \cite{A97} that a ring $R$ is  an \emph{$\NP$-ring} if $\Nil(R)$ is a prime ideal, and a \emph{$\ZN$-ring} if $\Z(R)=\Nil(R)$. A prime ideal $P$ is said to be \emph{divided prime} if $P\subsetneq (x)$, for every $x\in R-P$. Set $\mathcal{H}=\{R|R$ is a commutative ring and \Nil(R)\ is a divided prime ideal of $R\}$. A ring $R$ is a \emph{$\phi$-ring} if $R\in \mathcal{H}$. Moreover, a $\ZN$ $\phi$-ring is said to be a \emph{strongly $\phi$-ring}. For a  $\phi$-ring $R$, there is a ring homomorphism $\phi:\T(R)\rightarrow R_{\Nil(R)}$ such that $\phi(a/b)=a/b$ where $a\in R$ and $b$ is a regular element. Denote by the ring $\phi(R)$ the image of $\phi$ restricted to $R$.    Let $R$  be an $\NP$-ring. It is easy to verify that $\NN(R)$ is a multiplicative system of ideals.  Let $M$ be an $R$-module.  Define
\begin{center}
$\phi$-$tor(M)=\{x\in M|Ix=0$ for some  $I\in \NN(R)\}$.
\end{center}
An $R$-module $M$ is said to be \emph{$\phi$-torsion} (resp., \emph{$\phi$-torsion free}) provided that  $\phi$-$tor(M)=M$ (resp., $\phi$-$tor(M)=0$).

In 2001, Badawi \cite{A01} investigated  $\phi$-chain rings ($\phi$-CRs for short) and $\phi$-pseudo-valuation rings as a $\phi$-version of chain rings and pseudo-valuation rings. In 2004, Anderson and Badawi \cite{FA04} introduced the concept of $\phi$-\Prufer\ rings and showed that a $\phi$-ring $R$ is $\phi$-\Prufer\ if and only if $R_{\fkm}$ is a $\phi$-chain ring for any maximal ideal $\fkm$ of $R$ if and only if $R/\Nil(R)$ is a \Prufer\ domain if and only if $\phi(R)$ is \Prufer. Recently, the authors in \cite{ZW21} said a  $\phi$-ring to be a \emph{$\phi$-\Prufer\ $v$-multiplication ring} ($\phi$-$\PvMR$ for short) for short provided that any finitely generated nonnil ideal is $\phi$-$w$-invertible, and showed that  a  $\phi$-ring $R$ is a $\phi$-$\PvMR$ if and only if    $R_{\fkm}$ is a $\phi$-CR for any $\fkm\in w$-$Max(R)$, if and only if    $R/\Nil(R)$ is a $\PvMD$, if and only if    $\phi(R)$ is a $\PvMR$. Let $R$ be an $\NP$-ring. The author in \cite{ZW21} called an $R$-module $M$ if $\Ext_R^1(T,M)$ is $\GV$-torsion for any $\phi$-torsion $R$-module $T$. Then they gave a homological characterization of $\phi$-$\PvMR$s in term of $\phi$-$w$-flat modules  when $R$ is a strongly $\phi$-ring.

In this paper, we study the ``hereditary'' version of $\phi$-$w$-flat modules over  $\NP$-rings $R$, that is, an $R$-module $M$ is said to be \emph{strongly $\phi$-$w$-flat} if $\Ext_R^n(T,M)$ is $\GV$-torsion for any $\phi$-torsion $R$-module $T$ and any $n\geq 1$. We show that all $\phi$-$w$-flat modules are strongly $\phi$-$w$-flat over $\ZN$-rings, and then induced the related homological theories. Finally, we characterize $\phi$-\Prufer\ $v$-multiplication strongly $\phi$-rings in terms of strongly $\phi$-$w$-flat modules.

As our work is related with $w$-operations, we give a quick review. Let $J$ a finitely generated ideal of a ring $R$. Then $J$ is called a \emph{$\GV$-ideal} if the natural homomorphism $R\rightarrow \Hom_R(J,R)$ is an isomorphism. The set of all $\GV$-ideals is denoted by $\GV(R)$. Certainly,  $\GV$-ideals are nonnil.
An $R$-module $M$ is said to be \emph{$\GV$-torsion} if for any $x\in M$ there is a $\GV$-ideal $J$ such that $Jx=0$; an $R$-module $M$ is said to be \emph{$\GV$-torsion free} if $Jx=0$, then $x=0$ for any $J\in\GV(R)$ and $x\in M$.  A $\GV$-torsion free module $M$ is said to be a \emph{$w$-module} if  for any $x\in E(M)$ there is a $\GV$-ideal $J$ such that $Jx\subseteq M$ where $E(M)$ is the injective envelope of $M$. The \emph{$w$-envelope} $M_w$ of a $\GV$-torsion free module $M$ is defined  by the minimal $w$-module  that contains $M$. Therefore, a $\GV$-torsion free module $M$ is a $w$-module if and only if $M_w=M$. A \emph{maximal $w$-ideal} for which is maximal among the $w$-submodules of $R$ is proved to be prime (see {\cite[Proposition 3.8]{hfxc11}}). The set of all maximal $w$-ideals is denoted by $w$-$\Max(R)$.  A sequence $A\rightarrow B\rightarrow C$ is said to be \emph{$w$-exact} if for any $ \p\in w$-$\Max(R)$, $A_\p\rightarrow B_\p\rightarrow C_\p$ is exact. A class $\mathcal{C}$ of $R$-modules is said to be \emph{closed under $w$-isomorphisms}  provided that for any $w$-isomorphism $f:M\rightarrow N$, if one of the modules $M$ and $N$ is in $\mathcal{C}$, so is the other. An $R$-module $M$ is said to be of \emph{finite type} if there exist a finitely generated free module $F$ and a $w$-epimorphism $g: F\rightarrow M$, or equivalently, if there exists a
finitely generated $R$-submodule $N$ of $M$ such that $N_w = M_w$. Certainly, the class of  finite type modules is closed under $w$-isomorphisms.

\section{strongly $\phi$-$w$-flat modules}

Let $R$ be a $\NP$-ring. Recall from  \cite{ZW21} called an $R$-module $M$ $\phi$-$w$-flat if $\Tor^R_1(T,M)$ is $\GV$-torsion for any $\phi$-torsion $R$-module $T$.  We begin with the definition of strongly $\phi$-$w$-flat modules.
\begin{definition}\label{w-phi-flat }
Let $R$ be a $\NP$-ring. An $R$-module $M$ is said to be \emph{strongly $\phi$-$w$-flat} if $\Tor^R_n(T,M)$ is $\GV$-torsion for any $\phi$-torsion $R$-module $T$ and any $n\geq 1$.
\end{definition}

 \begin{proposition}\label{w-phi-flat}
Let $R$ be an $\NP$-ring. The following statements are equivalent for an $R$-module $M$:
\begin{enumerate}
    \item $M$ is strongly $\phi$-$w$-flat;
    \item  $M_{\fkm}$ is strongly ${\phi}$-flat over $R_{\fkm}$ for all $\fkm\in w$-$Max(R)$;
        \item  $M_{\p}$ is strongly ${\phi}$-flat over $R_{\p}$ for all prime $w$-ideal $\p$ of $R$;
    \item $\Tor^R_n (T, M) $ is $\GV$-torsion for all $($finite type$)$ $\phi$-torsion $R$-modules $T$ for any $n\geq 1$;
    \item $\Tor^R_n (R/I, M)$ is $\GV$-torsion for all $($finite type$)$ nonnil ideals $I$ of $R$.
\end{enumerate}
\end{proposition}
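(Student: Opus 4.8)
The plan is to run the cycle $(1)\Ra(4)\Ra(5)\Ra(1)$ for the global conditions and to prove $(1)\Lra(2)$ and $(1)\Lra(3)$ by a single local--global argument, after which $(2)\Lra(3)$ is automatic. The backbone is the standard fact that an $R$-module $N$ is $\GV$-torsion if and only if $N_\p=0$ for every prime $w$-ideal $\p$ (equivalently $N_\fkm=0$ for every $\fkm\in w$-$\Max(R)$); the nontrivial direction uses that a $\GV$-ideal lies in no maximal $w$-ideal, hence meets $R\setminus\p$. Since each $R_\p$ and each $R_\fkm$ is a $\DW$-ring, its only $\GV$-ideal is the unit ideal, so ``$\GV$-torsion'' over these rings means ``zero''; thus strong $\phi$-flatness of $M_\p$ over $R_\p$ is precisely the vanishing of $\Tor^{R_\p}_n(T',M_\p)$ for all $\phi$-torsion $R_\p$-modules $T'$ and all $n\geq1$.

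For the global cycle, $(1)\Ra(4)$ and $(4)\Ra(5)$ are immediate: the finite type $\phi$-torsion modules form a subclass, and for any $I\in\NN(R)$ the cyclic module $R/I$ is finite type and $\phi$-torsion since $I\cdot(R/I)=0$. The substantive step is $(5)\Ra(1)$. First I would reduce an arbitrary $\phi$-torsion module to its finite type $\phi$-torsion submodules: writing $T=\varinjlim T_\lambda$ as the directed union of these, $\Tor^R_n(T,M)=\varinjlim\Tor^R_n(T_\lambda,M)$, and a direct limit of $\GV$-torsion modules is $\GV$-torsion, so it suffices to treat finite type $T$. Such a $T$ is annihilated by some $I\in\NN(R)$, hence is a finitely generated $R/I$-module and admits a finite filtration whose successive quotients are cyclic modules $R/J$ with $J\supseteq I$ (so $J\in\NN(R)$). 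Feeding these short exact sequences into the long exact sequence for $\Tor^R_*(-,M)$ and using that $\GV$-torsion modules are closed under extensions, hypothesis (5) propagates through the filtration to give that $\Tor^R_n(T,M)$ is $\GV$-torsion for every $n\geq1$; this d\'evissage is exactly what handles all $n$ uniformly.

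For the local equivalences I would use the flat base-change isomorphism $\Tor^R_n(T,M)_\p\cong\Tor^{R_\p}_n(T_\p,M_\p)$ (and likewise at $\fkm$). In the direction $(2)\Ra(1)$ and $(3)\Ra(1)$, given a $\phi$-torsion $R$-module $T$ I would check that $T_\p$ is $\phi$-torsion over $R_\p$: if $Ix=0$ with $I\in\NN(R)$ then $I_\p(x/s)=0$, and $I_\p$ stays nonnil because $\Nil(R_\p)=\Nil(R)_\p$ is again prime ($R$ being an $\NP$-ring) and $I\not\subseteq\Nil(R)$. Then $\Tor^R_n(T,M)_\p=0$ for all $\p$, so by the backbone $\Tor^R_n(T,M)$ is $\GV$-torsion. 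For $(1)\Ra(2)$ and $(1)\Ra(3)$ I would go through (5): by the $\DW$-ring reduction and d\'evissage over $R_\p$ it suffices to kill $\Tor^{R_\p}_n(R_\p/I',M_\p)$ for finitely generated nonnil $I'$; clearing denominators writes $I'=I_\p$ with $I\in\NN(R)$ finitely generated, whence $\Tor^{R_\p}_n(R_\p/I',M_\p)\cong\Tor^R_n(R/I,M)_\p=0$ because $\Tor^R_n(R/I,M)$ is $\GV$-torsion by (5).

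I expect the main obstacle to be the careful matching of the two torsion classes across localization in the local equivalences---verifying that $\phi$-torsion $R$-modules localize to $\phi$-torsion $R_\p$-modules and, conversely, that every finitely generated nonnil ideal of $R_\p$ descends to a nonnil ideal of $R$ so that condition (5) can be invoked downstairs---together with confirming that nonnil ideals remain nonnil after localization. Once these compatibilities are in place, the simultaneous treatment of all $n\geq1$ demands no extra work beyond the d\'evissage in $(5)\Ra(1)$.
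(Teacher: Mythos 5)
Your argument is correct and, in substance, it reconstructs exactly what the paper outsources to citations: the printed proof is a single line invoking \cite[Theorem 6.2.15]{fk16} (a module is $\GV$-torsion if and only if it vanishes at every maximal $w$-ideal), \cite[Lemma 1.4]{ZW22-strong phi-dim} (the characterization of strongly $\phi$-flat modules, applied over the $\DW$ local rings $R_{\fkm}$ and $R_{\p}$), and \cite[Theorem 3.2]{ZWT13}. Your local--global backbone together with the d\'evissage of a finitely generated $\phi$-torsion module into cyclic quotients $R/J$ with $J$ nonnil is precisely the content of those references, so the route is the same; what your write-up buys is self-containedness, at the price of having to verify the compatibilities you correctly isolate ($T_{\p}$ stays $\phi$-torsion because $\Nil(R_{\p})=\Nil(R)_{\p}$ is prime, finitely generated nonnil ideals of $R_{\p}$ descend to finitely generated nonnil ideals of $R$, and $R_{\p}$ is a $\DW$ ring). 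The only step you leave implicit is the bridge from the parenthetical finite-type reading of (5) to the unrestricted one: your filtration produces cyclic quotients $R/J$ with $J$ nonnil but not necessarily of finite type, so before invoking (5) you should write $R/J=\varinjlim R/J_{\lambda}$ over the finitely generated subideals $J_{\lambda}\subseteq J$ containing a fixed non-nilpotent element of $J$ (these are cofinal and nonnil) and use that $\Tor$ commutes with direct limits and that $\GV$-torsion modules are closed under direct limits --- the same device you already deploy to reduce arbitrary $\phi$-torsion modules to finitely generated ones. With that sentence added, the cycle closes for both readings of (4) and (5).
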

\begin{proof} It follows by \cite[Theorem 6.2.15]{fk16}, \cite[Lemma 1.4]{ZW22-strong phi-dim}  and \cite[Theorem 3.2]{ZWT13}.
\end{proof}

 \begin{corollary}\label{w-closed}
Let $R$ be an $\NP$-ring and $0\rightarrow A\rightarrow B\rightarrow C\rightarrow 0$ be $w$-exact seqeunce. Then the following statements hold.
\begin{enumerate}
    \item The class of $\phi$-$w$-flat modules is closed under $w$-isomorphisms.
    \item If $B$ and $C$ are strongly $\phi$-$w$-flat, so is $A$.
    \item If $A$ and $C$ are strongly $\phi$-$w$-flat, so is $B$.
\end{enumerate}
\end{corollary}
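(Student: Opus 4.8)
The plan is to reduce all three assertions to the local characterization of Proposition~\ref{w-phi-flat}, which says that $M$ is strongly $\phi$-$w$-flat precisely when $M_{\fkm}$ is strongly $\phi$-flat over $R_{\fkm}$ for every $\fkm\in w\mbox{-}\Max(R)$. Passing to these localizations is exactly what tames the two features that would otherwise obstruct a direct homological argument: a $w$-isomorphism has $\GV$-torsion kernel and cokernel, both of which vanish after localizing at a maximal $w$-ideal, so it becomes an honest isomorphism there; and the merely $w$-exact sequence $0\to A\to B\to C\to 0$ becomes, by the definition of $w$-exactness, a genuine short exact sequence $0\to A_{\fkm}\to B_{\fkm}\to C_{\fkm}\to 0$ of $R_{\fkm}$-modules. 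Hence every claim can be verified one maximal $w$-ideal at a time, where ordinary homological algebra applies and where ``strongly $\phi$-flat'' records the literal vanishing of all higher $\Tor$ against $\phi$-torsion modules.

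For part~(1) I would observe that, whether one reads the class as the strongly $\phi$-$w$-flat modules via Proposition~\ref{w-phi-flat} or as the $\phi$-$w$-flat modules via the corresponding local characterization, membership is a condition imposed pointwise at each $\fkm\in w\mbox{-}\Max(R)$. If $f\colon M\to N$ is a $w$-isomorphism, then $f_{\fkm}\colon M_{\fkm}\to N_{\fkm}$ is an isomorphism for every such $\fkm$, so $M_{\fkm}$ satisfies the local flatness condition if and only if $N_{\fkm}$ does; the two modules therefore lie in the class simultaneously, and closure under $w$-isomorphisms follows.

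For parts~(2) and~(3) I would fix $\fkm\in w\mbox{-}\Max(R)$ and an arbitrary $\phi$-torsion $R_{\fkm}$-module $T$, and apply $T\otimes_{R_{\fkm}}-$ to the localized short exact sequence to obtain the long exact sequence
\[
\cdots\to \Tor^{R_{\fkm}}_{n+1}(T,C_{\fkm})\to \Tor^{R_{\fkm}}_{n}(T,A_{\fkm})\to \Tor^{R_{\fkm}}_{n}(T,B_{\fkm})\to \Tor^{R_{\fkm}}_{n}(T,C_{\fkm})\to \Tor^{R_{\fkm}}_{n-1}(T,A_{\fkm})\to\cdots.
\]
When $B$ and $C$ are strongly $\phi$-$w$-flat the neighbouring terms $\Tor^{R_{\fkm}}_{n+1}(T,C_{\fkm})$ and $\Tor^{R_{\fkm}}_{n}(T,B_{\fkm})$ vanish for all $n\geq 1$, squeezing $\Tor^{R_{\fkm}}_{n}(T,A_{\fkm})$ to $0$ and giving~(2); when $A$ and $C$ are strongly $\phi$-$w$-flat the terms $\Tor^{R_{\fkm}}_{n}(T,A_{\fkm})$ and $\Tor^{R_{\fkm}}_{n}(T,C_{\fkm})$ vanish for all $n\geq 1$, squeezing $\Tor^{R_{\fkm}}_{n}(T,B_{\fkm})$ to $0$ and giving~(3). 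Since the relevant vanishing holds at every $\fkm$, Proposition~\ref{w-phi-flat} upgrades it to strong $\phi$-$w$-flatness of $A$ and of $B$ respectively.

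The only genuinely delicate point is the reduction itself, and it is already absorbed into Proposition~\ref{w-phi-flat}: one must know both that $\GV$-torsion is detected maximal-$w$-ideal-wise, so that the global $\GV$-torsion conclusions follow from the local vanishing, and that $w$-exactness is precisely localized exactness at the maximal $w$-ideals. Granting these, the argument is nothing more than the classical two-out-of-three behaviour of $\Tor$, and I anticipate no further obstacle. In particular, note that, as is usual for flatness-type notions, there is no analogous statement deducing strong $\phi$-$w$-flatness of $C$ from that of $A$ and $B$, which is why only these two of the three possible combinations appear.
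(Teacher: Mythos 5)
Your proof is correct, but it follows a genuinely different route from the paper's. The paper works globally over $R$: for (1) it factors the $w$-isomorphism $f\colon M\to N$ through $L=\im f$ to get two exact sequences $0\to T_1\to M\to L\to 0$ and $0\to L\to N\to T_2\to 0$ with $T_1,T_2$ $\GV$-torsion, then runs the long exact sequences of $\Tor^R_\ast(T,-)$ and uses that $\Tor^R_n(T,T_i)$ is $\GV$-torsion to transfer the $\GV$-torsion property between $\Tor^R_n(T,M)$ and $\Tor^R_n(T,N)$; parts (2) and (3) are dismissed as ``the classical cases,'' i.e.\ the same long-exact-sequence bookkeeping with the class of $\GV$-torsion modules playing the role of the zero module. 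You instead localize everything at each $\fkm\in w$-$\Max(R)$, invoking Proposition~\ref{w-phi-flat}(2): the $w$-isomorphism becomes an isomorphism, the $w$-exact sequence becomes exact, ``$\GV$-torsion'' becomes ``zero,'' and the argument reduces to the literal two-out-of-three property of $\Tor$ over $R_{\fkm}$. Your version is more uniform (all three parts are handled by one mechanism) and avoids the slightly delicate point that the paper glosses over in (2) and (3), namely that the ambient sequence is only $w$-exact, so the global long exact sequence must itself be taken in the $w$-exact sense via \cite[Theorem~6.7.2]{fk16}. The price is that your argument leans entirely on the local characterization of Proposition~\ref{w-phi-flat}(2) (and, for the $\phi$-$w$-flat class in part (1), its $\Tor_1$ analogue from \cite{ZW21}), including the implicit facts that each $R_{\fkm}$ is again an $\NP$-ring and that $\GV$-torsion is detected at maximal $w$-ideals; the paper's global argument exposes exactly which $\GV$-torsion modules control the comparison and needs no localization at all.
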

\begin{proof} (1) Let $f:M\rightarrow N$ be a $w$-isomorphism and $T$  a $\phi$-torsion module.  There exist two exact sequences $0\rightarrow T_1\rightarrow M\rightarrow L\rightarrow 0$ and $0\rightarrow L\rightarrow N\rightarrow T_2\rightarrow 0$ with $T_1$ and $T_2$ $\GV$-torsion. For any $n\geq 1$,
consider the induced two long exact sequences $$\Tor^R_n(T,T_1)\rightarrow \Tor^R_n (T,M)\rightarrow \Tor^R_n (T,L)\rightarrow\Tor^R_{n-1}(T,T_1)$$ and $$ \Tor^R_{n+1} (T,T_2)\rightarrow \Tor^R_n(T,L)\rightarrow \Tor^R_n(T,N)\rightarrow \Tor^R_n(T,T_2).$$ Since $\Tor^R_n(T,T_i)$ and $\Tor^R_{n-1}(T,T_i)$ are $\GV$-torsion modules for any $i=1,2$ and $n\geq 1$,  we have $M$ is $\phi$-$w$-flat if and only if $N$ is $\phi$-$w$-flat by Proposition \ref{w-phi-flat}.

(2) and (3) can be similarly deduced as the classical cases.
\end{proof}

Recall from \cite{fk10} that a ring $R$ is said to be a \emph{$\DW$ ring} if every ideal of $R$ is a $w$-ideal. Then a ring $R$ is a $\DW$
ring if and only if every $R$-module is a $w$-module, if and only if $\GV(R) = \{R\}$ (see \cite[Theorem 3.8]{fk10}). Let $R$ be a $\NP$-ring. An $R$-module $M$ is said to be \emph{strongly $\phi$-flat} if $\Tor^R_n(T,M)=0$ for any $\phi$-torsion $R$-module $T$ and any $n\geq 1$.
Trivially, If $R$ is a $\DW$-ring, every  strongly $\phi$-$w$-flat module is strongly $\phi$-flat. Furthermore, this property characterizes $\DW$ rings.
\begin{proposition}\label{w-flat}
 The following statements are equivalent for an $\NP$-ring $R$:
\begin{enumerate}
    \item $R$ is a $\DW$ ring;
    \item  every strongly  $\phi$-$w$-flat module is strongly  $\phi$-flat;
    \item  every $w$-flat module is strongly  $\phi$-flat.
    \item  every $w$-flat module is  $\phi$-flat.
\end{enumerate}
\end{proposition}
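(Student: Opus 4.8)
The plan is to prove the cycle $(1)\Rightarrow(2)\Rightarrow(3)\Rightarrow(4)\Rightarrow(1)$. The implication $(1)\Rightarrow(2)$ is the remark already recorded just before the statement: over a $\DW$ ring every module is a $w$-module, so every $\GV$-torsion module vanishes and the condition ``$\Tor^R_n(T,M)$ is $\GV$-torsion'' upgrades to ``$\Tor^R_n(T,M)=0$''. The implication $(3)\Rightarrow(4)$ is immediate by restricting the defining condition to $n=1$. So the real work is concentrated in $(2)\Rightarrow(3)$ and, above all, in $(4)\Rightarrow(1)$.

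For $(2)\Rightarrow(3)$ I would first observe that every $w$-flat module $M$ is automatically strongly $\phi$-$w$-flat, with no hypothesis on $R$. Indeed, $w$-flatness says $\Tor^R_1(N,M)$ is $\GV$-torsion for \emph{every} $R$-module $N$ (not merely for $\phi$-torsion $N$), and a standard dimension-shifting argument promotes this to all higher degrees: choosing a short exact sequence $0\to K\to F\to N\to 0$ with $F$ free gives $\Tor^R_n(N,M)\cong\Tor^R_{n-1}(K,M)$ for $n\geq 2$, so by induction $\Tor^R_n(N,M)$ is isomorphic to $\Tor^R_1$ of an $(n-1)$-st syzygy and hence $\GV$-torsion. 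In particular $\Tor^R_n(T,M)$ is $\GV$-torsion for every $\phi$-torsion $T$ and every $n\geq 1$, so $M$ is strongly $\phi$-$w$-flat. Applying hypothesis $(2)$ then makes $M$ strongly $\phi$-flat, which is exactly $(3)$.

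The crux is $(4)\Rightarrow(1)$, and I would prove it by showing $\GV(R)=\{R\}$. Fix $J\in\GV(R)$; the goal is $J=R$. Two features of $J$ drive the argument: $J$ is nonnil (every $\GV$-ideal is), and $R/J$ is $\GV$-torsion, hence localizes to $0$ at every $\p\in w$-$\Max(R)$, so $\Tor^R_1(N,R/J)_\p\cong\Tor^{R_\p}_1(N_\p,0)=0$ for all $N$; thus $R/J$ is $w$-flat. By $(4)$, $R/J$ is therefore $\phi$-flat. Because $J$ is nonnil, $R/J$ is itself $\phi$-torsion (every element is killed by $J\in\NN(R)$), so we may feed $T=R/J$ into the $\phi$-flatness of $R/J$ to obtain $\Tor^R_1(R/J,R/J)=0$. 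Computing this from $0\to J\to R\to R/J\to 0$ yields $\Tor^R_1(R/J,R/J)\cong J/J^2$, forcing $J=J^2$. Since $J$ is finitely generated, the determinant (Nakayama) trick produces $e\in J$ with $(1-e)J=0$, whence $e=e^2$ and $J=eR$. Finally, a $\GV$-ideal is faithful: the natural isomorphism $R\xrightarrow{\sim}\Hom_R(J,R)$ is in particular injective, so $\ann(J)=0$; as $\ann(eR)=(1-e)R$ this gives $e=1$ and $J=R$. Hence $\GV(R)=\{R\}$, i.e.\ $R$ is $\DW$.

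I expect $(4)\Rightarrow(1)$ to be the main obstacle: the easy implications only manipulate the definitions, whereas here one must manufacture a single module (namely $R/J$ for a $\GV$-ideal $J$) that is simultaneously $w$-flat and $\phi$-torsion, and then extract arithmetic information from the vanishing of a self-$\Tor$. The delicate points to get right are the identification $\Tor^R_1(R/J,R/J)\cong J/J^2$ and the passage $J=J^2\Rightarrow J=eR\Rightarrow J=R$, the last step using precisely that $\GV$-ideals are faithful.
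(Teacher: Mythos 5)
Your proof is correct, and it follows the same overall decomposition as the paper (the cycle $(1)\Rightarrow(2)\Rightarrow(3)\Rightarrow(4)\Rightarrow(1)$); the difference is one of self-containedness. The paper dismisses $(1)\Rightarrow(2)\Rightarrow(3)\Rightarrow(4)$ as trivial and disposes of the only substantive implication, $(4)\Rightarrow(1)$, by citing \cite[Theorem~1.10]{ZW21}, which is precisely the equivalence of $(1)$ and $(4)$. You instead reprove that implication from scratch, and your argument is sound: for $J\in\GV(R)$ the module $R/J$ is $\GV$-torsion, hence vanishes at every maximal $w$-ideal, hence is $w$-flat; it is also $\phi$-torsion because $\GV$-ideals are nonnil; feeding it into its own $\phi$-flatness gives $\Tor_1^R(R/J,R/J)\cong J/J^2=0$, and then finite generation plus the faithfulness of $\GV$-ideals (injectivity of $R\to\Hom_R(J,R)$ forces $\ann(J)=0$, killing the idempotent) yields $J=R$. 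Your explicit dimension-shifting justification that every $w$-flat module is strongly $\phi$-$w$-flat is also the right way to make $(2)\Rightarrow(3)$ honest rather than merely ``trivial.'' What your version buys is independence from the cited theorem; what it costs is nothing, since the steps you use (localization criterion for $\GV$-torsion, $\Tor_1(R/J,R/J)\cong J/J^2$, the determinant trick) are all standard. The one point worth flagging is that the final step genuinely needs the faithfulness of $\GV$-ideals --- $J=J^2$ alone only gives $J=eR$ for an idempotent $e$ --- and you identified and used exactly that.
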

\begin{proof} $(1)\Rightarrow (2)\Rightarrow (3)\Rightarrow (4)$: Trivial.

$(4)\Rightarrow (1)$:  Follows by \cite[Theorem 1.10]{ZW21}.
\end{proof}

\begin{remark}\label{flat-nots}
It follows by \cite[Theorem 1.4]{ZW21}, every strongly $\phi$-$w$-flat is $\phi$-$w$-flat. However, the converse is not true in general. Let $\mathbb{Z}$ be the ring of all integers with $\mathbb{Q}$ its quotients field, and  $\mathbb{Z}(\p^{\infty}):=\{\frac{n}{\p^k}+\mathbb{Z}\mid \frac{n}{\p^k}+\mathbb{Z} \in \mathbb{Q}/\mathbb{Z}\}$  the $\p$-Pr\"{u}fer group with $\p$ a prime in $\mathbb{Z}$.
Set $R=\mathbb{Z}(+)\mathbb{Z}(\p^{\infty})$ the trivial extension of $\mathbb{Z}$ with $\mathbb{Z}(\p^{\infty})$. Then $R$ is a $\phi$-ring with $\Nil(R)= 0(+)\mathbb{Z}(\p^{\infty})$.
Note that $R$ is a $\phi$-principal ideal ring (i.e., every nonnil ideal ideal is principal). Since every $\GV$-ideal is a nonnil ideal, and hence is principal. And so $R$ is a $\DW$-ring by \cite[Exercise 6.11(2)]{fk16}. Consequently, it follows from \cite[Example 1.1]{ZW22-strong phi-dim}  that $R/\Nil(R)$ is a  $\phi$-$w$-flat module which is  not strongly  $\phi$-flat, and hence not strongly  $\phi$-$w$-flat by Proposition \ref{w-flat}.

It follows from \cite[Theorem 1.9]{ZW21} that $\phi$-$w$-flat modules can be $w$-flat only over integral domains. This example also shows that strongly $\phi$-$w$-flat modules can be $w$-flat over non-integral domains(see \cite[Example 1.9]{ZW22-strong phi-dim}).
\end{remark}

If $R$ is an integral domain, every  strongly $\phi$-$w$-flat module is $w$-flat. Furthermore, every  strongly $\phi$-$w$-flat module is $w$-flat over ZN-rings.
\begin{theorem}\label{sp-ss}
Let the $\NP$-ring $R$ be  a \ZN\ ring.  Then  an $R$-module $M$ is  $\phi$-$w$-flat if and only if it is strongly $\phi$-$w$-flat.
\end{theorem}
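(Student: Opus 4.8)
The implication ``strongly $\phi$-$w$-flat $\Rightarrow$ $\phi$-$w$-flat'' is the case $n=1$, already recorded in Remark \ref{flat-nots}, so the substance is the converse. The plan is to localize and then reduce to a change-of-rings computation. Since a module is $\GV$-torsion exactly when it localizes to $0$ at every $\fkm\in w$-$\Max(R)$, and $\Tor$ commutes with localization, Proposition \ref{w-phi-flat} shows that $M$ is strongly $\phi$-$w$-flat iff $M_\fkm$ is strongly $\phi$-flat over $R_\fkm$ for all $\fkm\in w$-$\Max(R)$, while the degree-one analogue established in \cite{ZW21} shows $M$ is $\phi$-$w$-flat iff each $M_\fkm$ is $\phi$-flat over $R_\fkm$. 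Thus it suffices to prove the local claim: writing $A=R_\fkm$ and $N=M_\fkm$, if $N$ is $\phi$-flat over $A$ then it is strongly $\phi$-flat over $A$. Here $A$ is an $\NP$-ring with $\Nil(A)=\Nil(R)_\fkm$; it is a $\DW$-ring, since every $J\in\GV(R)$ satisfies $J_w=R$, hence $J\not\subseteq\fkm$ and $J_\fkm=A$, so $\GV(A)=\{A\}$; and it is again a $\ZN$-ring, for a zero-divisor $a/t$ of $A$ yields $uab=0$ with $u\notin\fkm$ and $ub\neq 0$, making $a$ a zero-divisor of $R$, so $a\in\Nil(R)$ and $a/t\in\Nil(A)$.

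For the local claim I set $S=A\setminus\Nil(A)$. Because $A$ is $\ZN$, $S$ is exactly the set of non-zero-divisors, every nonnil ideal meets $S$, and $\T(A)=S^{-1}A$; consequently an $A$-module is $\phi$-torsion precisely when it is $S$-torsion. The first step is to see that $\phi$-flatness forces $N$ to be $S$-torsion-free. Indeed, for $s\in S$ the cyclic module $A/sA$ is $\phi$-torsion, and the free resolution $0\to A\xrightarrow{s}A\to A/sA\to 0$ identifies $\Tor^A_1(A/sA,N)$ with $(0:_N s)$; over the $\DW$-ring $A$ this $\GV$-torsion group is genuinely $0$, so multiplication by $s$ is injective on $N$. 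Hence every $s\in S$ is a non-zero-divisor on both $A$ and $N$.

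This honest regularity is what drives the change-of-rings step. Fix $s\in S$, put $\bar A=A/sA$ and $\bar N=N/sN$. Since $s$ is $A$- and $N$-regular, $\Tor^A_i(\bar A,N)=0$ for $i\ge 1$, so the change-of-rings spectral sequence collapses to natural isomorphisms $\Tor^A_n(L,N)\cong\Tor^{\bar A}_n(L,\bar N)$ for every $\bar A$-module $L$ and all $n\ge 0$. Taking $L=A/J$ with $J\supseteq sA$ finitely generated (such $J$ is nonnil, so $A/J$ is $\phi$-torsion and $\Tor^A_1(A/J,N)=0$) gives $\Tor^{\bar A}_1(A/J,\bar N)=0$ for all finitely generated ideals of $\bar A$, i.e. $\bar N$ is flat over $\bar A$. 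Now by Proposition \ref{w-phi-flat}(5), applied over the $\DW$-ring $A$, it is enough to check $\Tor^A_n(A/I,N)=0$ for finitely generated nonnil $I$ and $n\ge 1$. Choosing $s\in I\cap S$ makes $A/I$ an $\bar A$-module, and the isomorphism above together with flatness of $\bar N$ over $\bar A$ gives $\Tor^A_n(A/I,N)\cong\Tor^{\bar A}_n(A/I,\bar N)=0$, completing the local claim and hence the theorem.

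The delicate point, and the step I expect to be the main obstacle, is securing \emph{genuine} regularity of $s$ on $N$ rather than vanishing only up to $\GV$-torsion: the change-of-rings collapse requires $\Tor^A_i(\bar A,N)=0$ on the nose. This is exactly why the reduction to the local $\DW$-ring $A=R_\fkm$ is essential, since there $\GV$-torsion means zero and $(0:_N s)$ truly vanishes. The remaining bookkeeping — that localization preserves $\NP$ and $\ZN$ and produces a $\DW$-ring, and that $\phi$-torsion coincides with $S$-torsion — is routine.
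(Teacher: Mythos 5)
Your argument is correct, and at its core it is the same change-of-rings computation as the paper's: pick a regular element $s$ in the nonnil ideal, pass to the quotient by $\langle s\rangle$, invoke \cite[Proposition 4.1.1]{CE56} to identify $\Tor^A_n(A/I,N)$ with $\Tor^{A/sA}_n(A/I,N/sN)$, and conclude from flatness of $N/sN$ over $A/sA$. The genuine difference is that you localize at maximal $w$-ideals \emph{before} doing this, and that step is not mere bookkeeping --- it repairs a real weak point. The paper works globally over $R$ and asserts that $\Tor^R_n(R/\langle a\rangle,M)=0$ for all $n\geq 1$ ``since $a$ is a non-zero-divisor of $R$''; but $\Tor^R_1(R/\langle a\rangle,M)\cong(0:_Ma)$, so the Cartan--Eilenberg hypothesis also requires $a$ to be regular on $M$, and $\phi$-$w$-flatness of $M$ only makes $(0:_Ma)$ $\GV$-torsion, not zero. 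This is precisely the ``delicate point'' you isolate: over $A=R_\fkm$ with $\fkm\in w$-$\Max(R)$, $\GV$-torsion means zero, so $(0:_Ns)=0$ honestly and the spectral sequence genuinely collapses; Proposition \ref{w-phi-flat}(2) and its degree-one analogue in \cite{ZW21} then transport the local conclusion back to $R$. The price is verifying that $A$ is again an $\NP$ $\ZN$-ring and is $\DW$. Your checks of the first two are fine, but your justification that $\GV(A)=\{A\}$ only shows that \emph{extended} $\GV$-ideals of $R$ become trivial in $A$, not that every $\GV$-ideal of $A$ is trivial; you should instead cite the standard fact that the localization of a ring at a prime $w$-ideal is a $\DW$-ring (see \cite{fk16}), or avoid $\DW$ altogether by quoting the characterization of strongly $\phi$-flat modules from \cite[Lemma 1.4]{ZW22-strong phi-dim} in your final step. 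With that reference supplied, your proof is complete and, on the regularity issue, more watertight than the printed one.
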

\begin{proof} Let $I$ be a nonnil ideal of $R$. Since $R$ is a \ZN\ ring, then $I$ contains a nonzero-divisor $a$. Suppose $M$ is a $\phi$-$w$-flat $R$-module.  Since $a$ is a non-zero-divisor of $R$, $\Tor_n^R(R/\langle a\rangle,M)=0$ for any positive integer $n$.  It follows by \cite[Proposition 4.1.1]{CE56} that  $$\Tor_1^{R/\langle a\rangle}(R/I,M/aM)\cong \Tor_1^{R/\langle a\rangle}(R/I,M\otimes_RR/\langle a\rangle)\cong \Tor_1^{R}(R/I,M)$$ which is $\GV$-torsion. Hence $M/Ma$ is a $w$-flat $R/\langle a\rangle$-module. Consequently, for any $n\geq 1$ we have $$\Tor_n^R(R/I,M)\cong \Tor_n^{R/\langle a\rangle}(R/I,M\otimes_RR/\langle a\rangle)\cong \Tor_n^{R/\langle a\rangle}(R/I,M/aM)$$ is $\GV$-torsion by \cite[Theorem 6.7.2]{fk16}. It follows that $M$ is a strongly $\phi$-flat $R$-module.
\end{proof}

\begin{remark}  Recall from \cite{Z22} that an $R$-module $M$ is said to be regular $w$-flat if  $\Tor^R_1(R/I,M)$ is $\GV$-torsion for any regular ideal (i.e., an ideal that contains a non-zero-divisor) $I$ of $R$. Similar with the proof of Theorem \ref{sp-ss}, one can show that an  $R$-module $M$ is  regular $w$-flat if and only if  $\Tor^R_n(R/I,M)=0$ for any regular ideal $I$ of $R$ and any $n\geq 1$.
\end{remark}


It is known that a \ZN\ $\phi$-ring is exactly a strongly $\phi$-ring. We consider the converse of Theorem \ref{sp-ss} under some assumptions.

\begin{theorem}\label{op-fn}
Let $R$ be  a $\phi$-ring with $(0:_Ra)$ finitely generated for any non-nilpotent element $a$ $($e.g. $R$ is a nonnil-coherent ring$)$ or $\Nil(R)$ nilpotent.  If every  $\phi$-$w$-flat  $R$-module is  strongly $\phi$-$w$-flat, then $R$ is a strongly $\phi$-ring.
\end{theorem}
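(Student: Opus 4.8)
The plan is to argue by contraposition: assume $R$ is a $\phi$-ring satisfying one of the two finiteness hypotheses but that $R$ is \emph{not} a strongly $\phi$-ring, i.e.\ not a $\ZN$-ring, and produce an $R$-module that is $\phi$-$w$-flat but not strongly $\phi$-$w$-flat, contradicting the assumption. Since $\Nil(R)\subseteq\Z(R)$ always and $R$ is not $\ZN$, there is a non-nilpotent $a\in\Z(R)$; fix $0\neq b$ with $ab=0$ and set $K:=(0:_Ra)$. Because $\Nil(R)$ is prime and $a\notin\Nil(R)$, the relation $ab=0\in\Nil(R)$ forces $b\in\Nil(R)$, and more generally $K\subseteq\Nil(R)$; note $K\neq0$.

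The witnessing module will be $R/\Nil(R)$. That $R/\Nil(R)$ is $\phi$-$w$-flat is known (see \cite[Example 1.1]{ZW22-strong phi-dim}, as already used in Remark \ref{flat-nots}). To see that it fails to be strongly $\phi$-$w$-flat I would exploit the short exact sequence $0\to\Nil(R)\to R\to R/\Nil(R)\to0$. Since $R$ is flat, the associated long exact $\Tor$-sequence gives, for every $\phi$-torsion module $T$ and every $n\geq2$, an isomorphism $\Tor^R_n(T,R/\Nil(R))\cong\Tor^R_{n-1}(T,\Nil(R))$. Thus it suffices to exhibit one $\phi$-torsion $T$ with $\Tor^R_1(T,\Nil(R))$ not $\GV$-torsion, for then $\Tor^R_2(T,R/\Nil(R))$ is not $\GV$-torsion and $R/\Nil(R)$ is not strongly $\phi$-$w$-flat.

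I would take $T=R/\langle a\rangle$, which is $\phi$-torsion because $\langle a\rangle$ is a nonnil ideal. Computing $\Tor^R_1(R/\langle a\rangle,\Nil(R))$ from the start of a free resolution $\cdots\to P_2\to R\xrightarrow{a}R\to R/\langle a\rangle\to0$ (whose first syzygy is $\langle a\rangle\cong R/K$ and whose second is generated by $K$), and using $K\subseteq\Nil(R)$, yields $\Tor^R_1(R/\langle a\rangle,\Nil(R))\cong K/K\Nil(R)$. So the whole problem reduces to showing that $K/K\Nil(R)$ is not $\GV$-torsion. Here the two hypotheses enter: a module is $\GV$-torsion iff it localizes to $0$ at every $\m\in w$-$\Max(R)$, and at such an $\m$ one has $(K/K\Nil(R))_\m\cong K_\m/K_\m\Nil(R_\m)$ with $\Nil(R_\m)\subseteq J(R_\m)$. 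If $(0:_Ra)$ is finitely generated then so is $K_\m$, and Nakayama gives $K_\m/K_\m\Nil(R_\m)\neq0$ whenever $K_\m\neq0$; if instead $\Nil(R)$ (hence $\Nil(R_\m)$) is nilpotent, then $K_\m=K_\m\Nil(R_\m)$ would force $K_\m=K_\m\Nil(R_\m)^k=0$, so again the localization is nonzero whenever $K_\m\neq0$.

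The main obstacle is therefore the final step: producing a maximal $w$-ideal $\m$ at which $K$ does not vanish, i.e.\ showing that $K=(0:_Ra)$ is not itself $\GV$-torsion for a suitably chosen non-nilpotent zero-divisor $a$ --- equivalently, that the failure of the $\ZN$ property survives localization at some $\m\in w$-$\Max(R)$. I would try to secure this by choosing $0\neq b\in K$ with $Rb$ not $\GV$-torsion (so that $(0:_Rb)$ lies in some $\m\in w$-$\Max(R)$); since $a\in(0:_Rb)$ this $\m$ automatically contains $a$ and witnesses $b/1\in K_\m\setminus\{0\}$. Ruling out the degenerate possibility that every cyclic submodule of $K$ is $\GV$-torsion (which would make $K$ itself $\GV$-torsion) is exactly where the finiteness of the annihilators, respectively the nilpotency of $\Nil(R)$, must be used. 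An alternative, and probably cleaner, organization is to localize first: find $\m\in w$-$\Max(R)$ for which $R_\m$ is a non-$\ZN$ (hence, being $\DW$, ordinary) $\phi$-ring, apply the non-$w$ version of this statement over $R_\m$ from \cite{ZW22-strong phi-dim} to get that $(R/\Nil(R))_\m=R_\m/\Nil(R_\m)$ is $\phi$-flat but not strongly $\phi$-flat, and then conclude via Proposition \ref{w-phi-flat}(2) that $R/\Nil(R)$ is not strongly $\phi$-$w$-flat. Either way, the crux --- and the only place the hypotheses are genuinely needed --- is guaranteeing that the non-$\ZN$ behaviour is detected by a maximal $w$-ideal rather than being washed out as $\GV$-torsion.
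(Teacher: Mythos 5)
Your computational core is the same as the paper's: the witness module is $R/\Nil(R)$, the key invariant is $\Tor_2^R(R/Ra,R/\Nil(R))\cong (0:_Ra)/(0:_Ra)\Nil(R)$, and the two hypotheses enter exactly through Nakayama-type arguments (the paper argues directly that $(0:_Ra)=0$ for every non-nilpotent $a$ rather than by contraposition, but that is cosmetic). However, the step you yourself flag as ``the main obstacle'' is a genuine gap, and your plan for closing it points in the wrong direction. Your argument only shows: if $K_\m\neq 0$ for some $\m\in w$-$\Max(R)$, then $K/K\Nil(R)$ is not $\GV$-torsion. You still must rule out the case that $K=(0:_Ra)$ is nonzero but $\GV$-torsion, and you suggest that ``the finiteness of the annihilators, respectively the nilpotency of $\Nil(R)$'' is what must be used there. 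It is not: those hypotheses are already fully spent in the Nakayama step, and your proposed device of picking $b\in K$ with $Rb$ not $\GV$-torsion simply restates the problem.

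The missing fact is independent of both hypotheses: $(0:_Ra)=\Ker(R\xrightarrow{a}R)$ is the kernel of a homomorphism of $w$-modules, hence is itself a $w$-ideal (this is \cite[Proposition 6.1.20]{fk16}, which the paper invokes); in particular it is $\GV$-torsion free, so a nonzero $(0:_Ra)$ cannot be $\GV$-torsion and must survive localization at some maximal $w$-ideal. This single observation closes your argument in both cases. The paper uses it explicitly in the $\Nil(R)$-nilpotent case (show $(0:_Ra)_\p=0$ for all prime $w$-ideals $\p$, conclude $(0:_Ra)$ is $\GV$-torsion, then $(0:_Ra)=0$ because it is a $w$-module). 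In the finitely generated case the paper avoids localization altogether: from $J\cdot\bigl((0:_Ra)/(0:_Ra)\Nil(R)\bigr)=0$ for some $J\in\GV(R)$ it deduces $J(0:_Ra)=J(0:_Ra)\Nil(R)$, kills the finitely generated module $J(0:_Ra)$ by the determinant-trick form of Nakayama (an element of $1+\Nil(R)$ is a unit), and then uses that $\GV$-ideals have zero annihilator to get $(0:_Ra)=0$. With the $w$-module observation added, your localized version of both cases also goes through; without it, the proof is incomplete.
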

\begin{proof} Let $R$ be  a $\phi$-ring and  $a$ a non-nilpotent element in $R$. It follows by \cite[Proposition 1.7]{ZW21} that $R/\Nil(R)$ is a $\phi$-$w$-flat  $R$-module, and so is strongly $\phi$-$w$-flat. Hence,  $$\Tor_2^R(R/Ra,R/\Nil(R))\cong\Tor_1^R(R/(0:_Ra),R/\Nil(R))\cong \frac{(0:_Ra)\cap\Nil(R)}{(0:_Ra)\Nil(R)} $$ is $\GV$-torsion. Since $R$ is  a $\phi$-ring,    $(0:_Ra)\subseteq \Nil(R)$, and so $(0:_Ra)\cap\Nil(R)=(0:_Ra)$. So  $\Tor_2^R(R/Ra,R/\Nil(R))\cong \frac{(0:_Ra)}{(0:_Ra)\Nil(R)}$.

(1) Suppose $(0:_Ra)$ is finitely generated. Then there exists $J\in \GV(R)$ such that $J\frac{(0:_Ra)}{(0:_Ra)\Nil(R)}=0$. So $J(0:_Ra)\subseteq (0:_Ra)\Nil(R)=(0:_Ra)J\Nil(R)$ by \cite[Lemma 1.6]{ZW21}. Hence $J(0:_Ra)=0$ by Nakayama's lemma. Since $J$ is semi-regular, so $(0:_Ra)=0$, that is, $a$ is a nonzero-divisor.

(2)  Suppose  $\Nil(R)$ is nilpotent. Let $\p$ be a prime $w$-ideal of $R$.  Then $$0=\Tor_2^R(R/Ra,R/\Nil(R))_\p\cong \frac{(0:_Ra)_\p}{(0:_Ra)_\p\Nil(R_\p)}\cong \frac{(0:_{R_\p}\frac{a}{1})}{(0:_{R_\p}\frac{a}{1})\Nil(R_\p)}.$$
It follows that $(0:_{R_\p}\frac{a}{1})=(0:_{R_\p}\frac{a}{1})\Nil(R_\p)$. Since $\Nil(R)$ is nilpotent, so is $\Nil(R_\p)$. Hence $(0:_{R_\p}\frac{a}{1})=(0:_Ra)_\p=0$ by  Nakayama's lemma. Hence $(0:_Ra)$ is $\GV$-torsion by \cite[Theorem 6.2.15]{fk16}. It follows by \cite[Proposition 6.1.20]{fk16} that $(0:_Ra)$ is a $w$-module, and so $(0:_Ra)=0$, that is, $a$ is a nonzero-divisor.
\end{proof}

\begin{remark}  We do not known that whether the condition ``$(0:_Ra)$ finitely generated for any non-nilpotent element $a$ or $\Nil(R)$ nilpotent'' in Theorem \ref{op-fn} can be removed, and so we propose the following conjecture:

Let $R$ be a $\phi$-ring. If  every  $\phi$-$w$-flat  $R$-module is  strongly $\phi$-$w$-flat, then
 $R$ is a strongly $\phi$-ring.
\end{remark}

\section{homological properties of strongly $\phi$-$w$-flat modules}

Let $R$ be a ring. It is well known that the flat dimension of an $R$-module $M$ is defined as the shortest flat resolution of $M$ and the weak global dimension of $R$ is the supremum of the flat dimensions  of all $R$-modules. The $w$-flat dimension $w$-fd$_R(M)$ of an $R$-module $M$ and $w$-weak global dimension $w$-w.gl.dim$(R)$ of a ring $R$ were introduced and studied in \cite{fq15}.  We now introduce the notion of $\phi$-$w$-flat dimension of an $R$-module as follows.

\begin{definition}\label{w-phi-flat }
Let $R$ be a ring and $M$ an $R$-module. We write $\phi_s$-$w$-fd$_R(M)\leq n$  ($\phi_s$-$w$-fd abbreviates  \emph{strongly $\phi$-$w$-flat dimension}) if there is a $w$-exact sequence of $R$-modules
$$ 0 \rightarrow F_n \rightarrow \cdots\rightarrow F_1\rightarrow F_0\rightarrow M\rightarrow 0   \ \ \ \ \ \ \ \ \ \ \ \ \ \ \ \ \ \ \ \ \ \ \ \ \ \ \ \ \ \ \ \ \ \ \ \ \ \ \ (\diamondsuit)$$
where each $F_i$ is  strongly $\phi$-$w$-flat for $i=0,\dots,n$. The $w$-exact sequence $(\diamondsuit)$ is said to be  a $\phi_s$-$w$-flat $w$-resolution of length $n$ of $M$. If such finite $w$-resolution does not exist, then we say $\phi_s$-$w$-fd$_R(M)=\infty$; otherwise,  define $\phi_s$-$w$-fd$_R(M) = n$ if $n$ is the length of the shortest $\phi_s$-$w$-flat $w$-resolution of $M$.
\end{definition}\label{def-wML}
It is obvious that an $R$-module $M$ is strongly $\phi$-$w$-flat if and only if $\phi_s$-$w$-fd$_R(M)= 0$. Certainly, $\phi_s$-$w$-fd$_R(M)\leq w$-fd$_R(M)$.

\begin{proposition}\label{w-phi-flat d}
Let $R$ be an $\NP$-ring. The following statements are equivalent for an $R$-module $M$:
\begin{enumerate}
    \item $\phi_s$-$w$-{\rm fd}$_R(M)\leq n$;
    \item $\Tor^R_{n+k}(T, M)$ is $\GV$-torsion for all $\phi$-torsion $R$-modules $T$ and all positive integer $k$;
        \item $\Tor^R_{n+k}(R/I,M)$  is $\GV$-torsion for all nonnil ideals $I$ and all positive integer $k$;
    \item $\Tor^R_{n+k}(R/I,M)$  is $\GV$-torsion for all finitely generated nonnil ideals $I$ and all positive integer $k$;
    \item if $0 \rightarrow F_n \rightarrow \cdots \rightarrow F_1\rightarrow F_0\rightarrow M\rightarrow 0$ is an $($a $w$-$)$exact sequence, where $F_0, F_1,\dots, F_{n-1}$ are strongly $\phi$-$w$-flat $R$-modules, then $F_n$ is strongly $\phi$-$w$-flat;
    \item if $0 \rightarrow F_n \rightarrow \cdots \rightarrow F_1\rightarrow F_0\rightarrow M\rightarrow 0$ is an $($a $w$-$)$exact sequence, where $F_0, F_1,\dots, F_{n-1}$ are $w$-flat $R$-modules, then $F_n$ is strongly $\phi$-$w$-flat;
    \item if $0 \rightarrow F_n \rightarrow \cdots \rightarrow F_1\rightarrow F_0\rightarrow M\rightarrow 0$ is an $($a $w$-$)$exact sequence, where $F_0, F_1,\dots, F_{n-1}$ are flat $R$-modules, then $F_n$ is strongly $\phi$-$w$-flat;
    \item there exists  $($a $w$-exact$)$ an exact sequence  $0 \rightarrow F_n \rightarrow \cdots \rightarrow F_1\rightarrow F_0\rightarrow M\rightarrow 0$ ,
where $F_0, F_1,\dots, F_{n-1}$ are $w$-flat $R$-modules, then $F_n$ is strongly $\phi$-$w$-flat;
 \item there exists  $($a $w$-exact$)$ an exact sequence  $0 \rightarrow F_n \rightarrow \cdots \rightarrow F_1\rightarrow F_0\rightarrow M\rightarrow 0$ ,
where $F_0, F_1,\dots, F_{n-1}$ are flat $R$-modules, then $F_n$ is strongly $\phi$-$w$-flat.
\end{enumerate}
\end{proposition}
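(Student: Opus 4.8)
The plan is to prove the nine conditions equivalent through the cycle $(1)\Rightarrow(2)\Rightarrow(5)\Rightarrow(6)\Rightarrow(7)\Rightarrow(9)\Rightarrow(1)$, supplemented by the side implications $(2)\Leftrightarrow(3)\Leftrightarrow(4)$ and $(6)\Rightarrow(8)\Rightarrow(1)$. The engine for the whole argument is a dimension-shifting computation in $\Tor$, performed modulo the hereditary torsion class of $\GV$-torsion modules. Two preliminary observations organize the resolution conditions: flat modules are $w$-flat, and $w$-flat modules are strongly $\phi$-$w$-flat (a $w$-flat module $F$ has $\Tor^R_j(N,F)$ $\GV$-torsion for every $N$ and every $j\ge1$, in particular for every $\phi$-torsion $T$). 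Consequently any flat resolution is a $w$-flat resolution and any $w$-flat resolution is a strongly $\phi$-$w$-flat resolution, which immediately gives the trivial implications $(5)\Rightarrow(6)\Rightarrow(7)$: the universal statement over the larger class of resolutions specializes to the smaller classes.

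For the core implications $(1)\Rightarrow(2)$ and $(2)\Rightarrow(5)$ I would run dimension shifting. Given a $w$-exact sequence $0\to F_n\to\cdots\to F_0\to M\to0$ with $F_0,\dots,F_{n-1}$ strongly $\phi$-$w$-flat, cut it into short pieces $0\to L_i\to F_i\to L_{i-1}\to0$ with $L_{-1}=M$ and $L_{n-1}=F_n$, and feed each piece into the long exact sequence for $\Tor^R(T,-)$. Since $\GV$-torsion modules are closed under submodules, quotients, and extensions, a three-term exact piece with $\GV$-torsion outer terms has $\GV$-torsion middle term; as each $\Tor^R_j(T,F_i)$ with $j\ge1$ is $\GV$-torsion, the $F_i$-terms are negligible and one obtains that $\Tor^R_k(T,F_n)$ is $\GV$-torsion if and only if $\Tor^R_{k+n}(T,M)$ is $\GV$-torsion, for every $\phi$-torsion $T$ and every $k\ge1$. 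From this equivalence, $(1)$ (a length-$n$ resolution whose last term $F_n$ is strongly $\phi$-$w$-flat) yields $(2)$, while conversely $(2)$ forces the syzygy $F_n$ to be strongly $\phi$-$w$-flat, which is $(5)$.

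The remaining implications are then formal. For $(2)\Leftrightarrow(3)\Leftrightarrow(4)$ I would follow the reduction of Proposition \ref{w-phi-flat}: if $I$ is nonnil then $R/I$ is $\phi$-torsion (each $x=r+I$ is killed by $I\in\NN(R)$), giving $(2)\Rightarrow(3)$; the step $(3)\Rightarrow(4)$ is trivial; and $(4)\Rightarrow(2)$ uses that every $\phi$-torsion module is a direct limit of finitely generated ones assembled by extensions from the modules $R/I$, together with the facts that $\Tor$ commutes with direct limits and that $\GV$-torsion is closed under direct limits and extensions. For the existence conditions, $(7)\Rightarrow(9)$ and $(6)\Rightarrow(8)$ simply apply the universal statement to a free resolution, which exists and is flat (hence $w$-flat); and $(9)\Rightarrow(1)$, $(8)\Rightarrow(1)$ note that in the given resolution $F_0,\dots,F_{n-1}$ are flat (or $w$-flat), hence strongly $\phi$-$w$-flat, while $F_n$ is strongly $\phi$-$w$-flat by hypothesis, so the whole sequence is a $\phi_s$-$w$-flat $w$-resolution of length $n$.

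I expect the main obstacle to be the bookkeeping required when the sequences are only $w$-exact rather than exact. Here I would localize at each $\fkm\in w\mbox{-}\Max(R)$: a $w$-exact sequence becomes exact over $R_\fkm$, $\Tor$ commutes with localization, a module is $\GV$-torsion if and only if it vanishes at every $\fkm\in w\mbox{-}\Max(R)$ by \cite[Theorem 6.2.15]{fk16}, and by Proposition \ref{w-phi-flat} each $(F_i)_\fkm$ is strongly $\phi$-flat over $R_\fkm$, so the $F_i$-contributions in the localized long exact sequences genuinely vanish. The delicate point is matching the global condition ``$\Tor^R_{n+k}(T,M)$ is $\GV$-torsion for all $\phi$-torsion $T$'' with the local condition ``$(F_n)_\fkm$ is strongly $\phi$-flat,'' i.e.\ verifying that the localizations $T_\fkm$ exhaust enough $\phi$-torsion $R_\fkm$-modules; this is exactly the content of the reduction underlying Proposition \ref{w-phi-flat}, and it is what allows the local-global passage to close the argument.
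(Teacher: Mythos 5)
Your proposal is correct and follows essentially the same strategy as the paper's proof: dimension shifting in $\Tor$ modulo the class of $\GV$-torsion modules (which is closed under submodules, quotients, extensions and direct limits), with the $w$-exactness handled by localizing at maximal $w$-ideals and invoking the local characterization of $\GV$-torsion and of strongly $\phi$-$w$-flat modules from Proposition \ref{w-phi-flat}. The only differences are cosmetic: you route the cycle through $(2)\Rightarrow(5)$ directly where the paper goes $(2)\Rightarrow(3)\Rightarrow(4)\Rightarrow(5)$, and for $(6)\Rightarrow(8)$ you truncate a free resolution where the paper invokes the existence of $w$-flat covers --- both suffice.
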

\begin{proof}
$(1) \Rightarrow(2)$: We prove $(2)$ by induction on $n$.  For the case $n = 0$, (2) trivially holds  as $M$ is strongly $\phi$-$w$-flat. If $n>0$, then
there is a  $w$-exact sequence  $0 \rightarrow F_n \rightarrow \cdots \rightarrow F_1\rightarrow F_0\rightarrow M\rightarrow 0$,
where each $F_i$ is strongly $\phi$-$w$-flat for $i=0,\dots,n$. Set $K_0 = \ker(F_0\rightarrow M)$. Then both
$0 \rightarrow  K_0 \rightarrow  F_0 \rightarrow  M \rightarrow  0 $ and $0 \rightarrow  F_n \rightarrow  F_{n-1} \rightarrow\cdots \rightarrow  F_1 \rightarrow  K_0 \rightarrow  0$ are $w$-exact. So $\phi$-fd$_R(K_0)\leq n-1$. By induction, $\Tor^R_{n-1+k}(T, K_0)$ is $\GV$-torsion
for all $\phi$-torsion $R$-modules $T$ and all positive integer $k$. By \cite[Theorem 6.7.2]{fk16}, it follows from the $w$-exact sequence   $$0=\Tor^R_{n+k}(T,F_0 )\rightarrow\Tor^R_{n+k}(T,M )\rightarrow \Tor^R_{n-1+k}(T, K_0 )\rightarrow \Tor^R_{n-1+k}(T, F_0)$$  that
$\Tor^R_{n+k}(T, M)$ is $w$-isomorphic to $\Tor^R_{n-1+k}(T, K_0 )$ which is $\GV$-torsion for  all $\phi$-torsion $R$-modules $T$ and all positive integer $k$.

$(2) \Rightarrow(3)\Rightarrow(4)$ and $(5) \Rightarrow(6)\Rightarrow(7)$:  Trivial.

$(4) \Rightarrow(5)$: Let $K_0 = \ker(F_0 \rightarrow  M)$ and $K_i = \ker(F_i \rightarrow  F_{i-1})$, where
$i = 1,\dots, n-1$. Then $K_{n-1}$ is $w$-isomorphic to $F_n$. Since all $F_0, F_1,\dots, F_{n-1}$ are strongly $\phi$-$w$-flat,
$\Tor^R_k (R/I, F_n)$  is $w$-isomorphic to $ \Tor^R_{n+k}(R/I, M)$ which  is $\GV$-torsion   for all finitely generated nonnil ideal $I$ and any  positive integer $k$   by \cite[Theorem 6.7.2]{fk16}. Hence $F_n$
is strongly $\phi$-$w$-flat by Proposition \ref{w-phi-flat}.

$(6) \Rightarrow (8)$: It follows by \cite[Theorem 3.5]{Z19-w-ce} that the class of $w$-flat modules is covering, we can construct an exact sequence $\cdots \rightarrow F_{n-1} \xrightarrow{d_{n-1}} F_{n-2}\rightarrow  \cdots \rightarrow F_1\rightarrow F_0\rightarrow M\rightarrow 0$, where $F_0, F_1,\dots, F_{n-1}$ are $w$-flat $R$-modules, then $F_n:=\Ker(d_{n-1})$ is strongly $\phi$-$w$-flat by $(6)$.

$(7) \Rightarrow (9)$: The proof is similar with that of $(6) \Rightarrow (8)$.

$(9) \Rightarrow (8) \Rightarrow (1)$: Trivial.
\end{proof}

\begin{proposition}\label{spd-ext}
Let $R$ be an $\NP$-ring and $0\rightarrow A\rightarrow B\rightarrow C\rightarrow 0$ be a short $w$-exact sequence of $R$-modules. Then the following statements hold.
\begin{enumerate}
  \item $\phi_s$-$w$-{\rm fd}$_R(C)\leq 1+\max\{\phi_s$-$w$-{\rm fd}$_R(A)$,
  $\phi_s$-$w$-{\rm fd}$_R(B)\}$.
    \item  If  $\phi_s$-$w$-{\rm fd}$_R(B)< \phi_s$-$w$-{\rm fd}$_R(C)$, then $\phi_s$-$w$-{\rm fd}$_R(A)= \phi_s$-$w$-{\rm fd}$_R(C)-1> \phi_s$-$w$-{\rm fd}$_R(B).$
\end{enumerate}
\end{proposition}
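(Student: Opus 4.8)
The plan is to translate everything into the homological criterion of Proposition \ref{w-phi-flat d}: for an $R$-module $M$ one has $\phi_s$-$w$-{\rm fd}$_R(M)\le n$ if and only if $\Tor^R_{n+k}(T,M)$ is $\GV$-torsion for every $\phi$-torsion module $T$ and every $k\ge 1$. The computations will be run at the level of localizations. For each $\fkm\in w$-$\Max(R)$ the short $w$-exact sequence $0\to A\to B\to C\to 0$ localizes to a genuine short exact sequence of $R_\fkm$-modules, hence yields an ordinary long exact $\Tor^{R_\fkm}_\bullet(T_\fkm,-)$-sequence; since $\Tor^R_n(T,-)_\fkm\cong\Tor^{R_\fkm}_n(T_\fkm,-_\fkm)$ and, by \cite[Theorem 6.2.15]{fk16}, an $R$-module is $\GV$-torsion exactly when all of its localizations at maximal $w$-ideals vanish, I may read off $\GV$-torsion statements directly from these local long exact sequences, exactly as in the proofs of Corollary \ref{w-closed} and Proposition \ref{w-phi-flat d}.

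For (1) put $n=\max\{\phi_s$-$w$-{\rm fd}$_R(A),\phi_s$-$w$-{\rm fd}$_R(B)\}$, fix a $\phi$-torsion $T$ and $k\ge 1$, and consider the segment
$$\Tor^R_{n+1+k}(T,B)\longrightarrow \Tor^R_{n+1+k}(T,C)\longrightarrow \Tor^R_{n+k}(T,A).$$
Its outer terms are $\GV$-torsion (the left because $n+1+k\ge n+1$ and $\phi_s$-$w$-{\rm fd}$_R(B)\le n$, the right because $n+k\ge n+1$ and $\phi_s$-$w$-{\rm fd}$_R(A)\le n$), so the middle term is $\GV$-torsion and Proposition \ref{w-phi-flat d} gives $\phi_s$-$w$-{\rm fd}$_R(C)\le n+1$.

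For (2) write $a,b,c$ for the three dimensions and assume $b<c$ (so $c\ge 1$). The upper bound $a\le c-1$ is obtained as in (1): with $m=c-1\ge b$, both outer terms of $\Tor^R_{m+1+k}(T,C)\to\Tor^R_{m+k}(T,A)\to\Tor^R_{m+k}(T,B)$ are $\GV$-torsion, whence $a\le m=c-1$. For the reverse bound, Proposition \ref{w-phi-flat d} furnishes a $\phi$-torsion $T_0$ with $\Tor^R_c(T_0,C)$ not $\GV$-torsion, so there is $\fkm\in w$-$\Max(R)$ with $\Tor^R_c(T_0,C)_\fkm\ne 0$. Since $b<c$, the module $\Tor^R_c(T_0,B)=\Tor^R_{(c-1)+1}(T_0,B)$ is $\GV$-torsion, hence $\Tor^R_c(T_0,B)_\fkm=0$; localizing $\Tor^R_c(T_0,B)\to\Tor^R_c(T_0,C)\xrightarrow{\ \partial\ }\Tor^R_{c-1}(T_0,A)$ at $\fkm$ then forces $\Tor^R_{c-1}(T_0,A)_\fkm\ne 0$, so $\Tor^R_{c-1}(T_0,A)$ is not $\GV$-torsion and $a\ge c-1$. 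Thus $a=c-1$, and since $b\le c-1=a$ this yields $a\ge b$ at once.

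The genuine difficulty is the \emph{strict} inequality $a>b$. The indexing above only controls $\Tor$-terms in degrees $\ge b+1$, and the decisive comparison between $A$ and $B$ happens in degree $c-1=a$, which is unprotected when $a=b$; nothing in the long exact sequence excludes the boundary case $a=b=c-1$. In fact this case does occur: over $R=k[x,y]$ (a reduced domain, hence a strongly $\phi$-ring whose maximal $w$-ideals are its height-one primes, each localization being a DVR) the exact sequence $0\to R\xrightarrow{\,x\,}R\to R/(x)\to 0$ satisfies $\phi_s$-$w$-{\rm fd}$_R(B)=0<\phi_s$-$w$-{\rm fd}$_R(C)=1$ while $\phi_s$-$w$-{\rm fd}$_R(A)=\phi_s$-$w$-{\rm fd}$_R(B)=0$. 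I would therefore expect the sharp conclusion to read $\phi_s$-$w$-{\rm fd}$_R(A)=\phi_s$-$w$-{\rm fd}$_R(C)-1\ge\phi_s$-$w$-{\rm fd}$_R(B)$, the inequality degenerating to equality exactly when $c=b+1$; establishing the strict form as stated would require an extra hypothesis forcing $c\ge b+2$ (equivalently $a\ne b$), and sorting out the correct statement here is the crux on which I would concentrate.
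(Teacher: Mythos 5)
Your arguments for part (1) and for the equality $\phi_s$-$w$-${\rm fd}_R(A)=\phi_s$-$w$-${\rm fd}_R(C)-1$ in part (2) follow essentially the same route as the paper: the characterization of Proposition \ref{w-phi-flat d} combined with the long exact $\Tor$-sequence (the paper runs the lower bound $a\geq c-1$ through an ``iff'' chain of $\GV$-torsion statements for $k>\phi_s$-$w$-${\rm fd}_R(B)$ rather than localizing at a witness maximal $w$-ideal, but the substance is identical). More importantly, your diagnosis of the strict inequality is right, and you should trust it: the paper's own proof of (2) establishes only $s=m-1$ and never returns to the claimed inequality $\phi_s$-$w$-${\rm fd}_R(A)>\phi_s$-$w$-${\rm fd}_R(B)$; from $b<c$ one gets only $a=c-1\geq b$, with equality possible when $c=b+1$. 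Your counterexample is valid: over $R=k[x,y]$ the sequence $0\to R\xrightarrow{x}R\to R/(x)\to 0$ has $a=b=0$, while $(x)$ is a divisorial height-one prime, so no $\GV$-ideal is contained in $(x)$, hence $\Tor_1^R(R/(x),R/(x))\cong (x)/(x^2)\cong R/(x)$ is not $\GV$-torsion and $c=1$; thus $a=c-1=b$, refuting the strict inequality. The correct conclusion of (2) is $\phi_s$-$w$-${\rm fd}_R(A)=\phi_s$-$w$-${\rm fd}_R(C)-1\geq\phi_s$-$w$-${\rm fd}_R(B)$, exactly as in the classical change-of-rings lemma for flat dimension; no extra hypothesis can rescue the statement as printed, so the fix is to weaken $>$ to $\geq$ rather than to search for a missing argument.
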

\begin{proof}

$(1)$ Assume the right side of $(1)$ is finite. Suppose  $\phi_s$-$w$-{\rm fd}$_R(A)\leq n$, $\phi_s$-$w$-{\rm fd}$_R(B)\leq n$.
For any $\phi$-torsion module $T$, it follows by \cite[Theorem 6.7.2]{fk16} that we have the following $w$-exact sequence:
$$\Tor^R_{n+2}(B,T)\rightarrow \Tor^R_{n+2}(C,T) \rightarrow \Tor^R_{n+1}(A,T).$$
By Proposition \ref{w-phi-flat d}, $\Tor^R_{n+2}(B,T)$ and $\Tor^R_{n+1}(A,T)$ is $\GV$-torsion,
and so is $\Tor^R_{n+2}(C,T)$. Hence $\phi_s$-$w$-{\rm fd}$_R(C)\leq 1+n$.

$(2)$ Assume $\phi_s$-$w$-{\rm fd}$_R(B)=n$. Suppose $k>n$. For any $\phi$-torsion module $T$,  it follows by \cite[Theorem 6.7.2]{fk16} that we have the following $w$-exact sequence:
$$\Tor^R_{k+1}(B,T) \rightarrow \Tor^R_{k+1}(C,T) \rightarrow \Tor^R_{k}(A,T) \rightarrow \Tor^R_{k}(B,T).$$
By Proposition \ref{w-phi-flat d},  $\Tor^R_{k+1}(B,T)$ and $\Tor^R_{k}(B,T)$ is $\GV$-torsion,
we have $\Tor^R_k(A,T)$ is $\GV$-torsion if and only if $\Tor^R_{k+1}(C,T)$ is $\GV$-torsion.
So $\phi_s$-$w$-{\rm fd}$_R(C)< \infty$ if and only if $\phi_s$-$w$-{\rm fd}$_R(A)<\infty$.

Assume $\phi_s$-$w$-{\rm fd}$_R(A)=s$ and $\phi_s$-$w$-{\rm fd}$_R(C)=m$.  For any $\phi$-torsion module $T$, we have $\Tor^R_{s+1}(A,T)$ and $\Tor^R_{s+2}(C,T)$ is $\GV$-torsion,  and so $m\leq s+1$.
On the other hand, since $\Tor^R_{m+1}(C,T)$ and $\Tor^R_{m}(A,T)$ is $\GV$-torsion,
so $s\leq m-1$, and hence $s=m-1$.
\end{proof}

Now, we are ready to introduce the $\phi$-$w$-weak global dimension of a ring  in terms of $\phi$-flat dimensions.
\begin{definition}\label{w-phi-flat }
The \emph{$\phi$-$w$-weak global dimension} of a ring $R$ is defined by
\begin{center}
$\phi$-$w$-w.gl.dim$(R) = \sup\{\phi_s$-$w$-{\rm fd}$_R(M) | M $ is an $R$-module$\}.$
\end{center}
\end{definition}\label{def-wML}

Obviously, by definition, $\phi$-$w$-w.gl.dim$(R)\leq w$-w.gl.dim$(R)$.  Notice that if $R$ is an integral domain, then $\phi$-$w$-w.gl.dim$(R)= w$-w.gl.dim$(R)$. The following result can be deduced easily by Proposition \ref{w-phi-flat d}.

\begin{theorem}\label{w-g-flat}
Let $R$ be an $\NP$-ring. The following statements are equivalent for $R$.
\begin{enumerate}
   \item  $\phi$-$w$-{\rm w.gl.dim}$(R)\leq  n$.

    \item $\Tor^R_{n+k}(M, N)$ is $\GV$-torsion for all $R$-modules $M$ and $\phi$-torsion $N$ and all $k > 0$.
    \item  $\Tor^R_{n+1}(M, N)$ is $\GV$-torsion for all $R$-modules $M$ and $\phi$-torsion $N$.
     \item  $\Tor^R_{n+1}(R/I,M)$ is $\GV$-torsion for all $R$-modules $M$ and (finite type) nonnil ideals $I$ of $R$.
    \item  $\Tor^R_{n+k}(R/I,M)$ is $\GV$-torsion for all $R$-modules $M$ and(finite type)  nonnil ideals $I$ of $R$ and all positive integer $k$.
    \item  $\phi_s$-$w$-{\rm fd}$_R(M)\leq n$ for all $R$-modules $M$.
    \item $\phi_s$-$w$-{\rm fd}$_R(R/I)\leq n$ for all  nonnil ideals $I$ of $R$.
    \item $\phi_s$-$w$-{\rm fd}$_R(R/I)\leq  n$ for all finite type  nonnil ideals $I$ of $R$.

\end{enumerate}
Consequently, the $\phi$-$w$-weak global dimension of $R$ is determined by the
formulas:
\begin{align*}
\phi\mbox{-}w\mbox{-{\rm w.gl.dim}}(R)&= \sup \{w\mbox{-}{\rm fd}_R(R/I) |\ I\ is\ a\  nonnil\ ideal\ of\ R\}\\
&= \sup \{w\mbox{-}{\rm fd}_R(R/I) |\ I\ is\ a\ finite\ type\ nonnil\ ideal\ of\ R\} .
\end{align*}
\end{theorem}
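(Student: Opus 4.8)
The plan is to read $(1)\Leftrightarrow(6)$ as the definition of $\phi$-$w$-{\rm w.gl.dim}$(R)$, and to obtain the whole block $(2)$--$(6)$ together with the two displayed formulas by specializing Proposition \ref{w-phi-flat d} to every module $M$ and then taking the supremum over $M$. Concretely, for a fixed $M$ the equivalence of conditions $(1)$--$(4)$ in Proposition \ref{w-phi-flat d} says that $\phi_s$-$w$-{\rm fd}$_R(M)\le n$ iff $\Tor^R_{n+k}(T,M)$ is $\GV$-torsion for all $\phi$-torsion $T$ (equivalently for all $(\text{finite type})$ nonnil $R/I$) and all $k>0$; quantifying over $M$ and using the symmetry $\Tor^R_m(A,B)\cong\Tor^R_m(B,A)$ turns these into $(2)$, $(3)$, $(5)$ and their finite-type variants. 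The passage between the ``$k>0$'' and the ``$k=1$'' forms, i.e.\ $(2)\Leftrightarrow(3)$ and $(5)\Leftrightarrow(4)$, is a routine dimension shift: given an arbitrary $M$ one picks $0\to K\to F\to M\to 0$ with $F$ free and uses $\Tor^R_{m}(N,M)\cong\Tor^R_{m-1}(N,K)$ for $m\ge 2$, applying the hypothesis to the syzygy $K$. Finally, reading $(4)$ for one fixed nonnil $I$ as ``$\Tor^R_{n+1}(R/I,M)$ is $\GV$-torsion for every $M$'' is exactly $w\mbox{-}{\rm fd}_R(R/I)\le n$, so $(1)\Leftrightarrow(4)$ delivers both displayed formulas at once.

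The implications $(6)\Rightarrow(7)\Rightarrow(8)$ are the trivial specializations $M=R/I$ followed by restriction to finite type ideals. The entire content of the theorem therefore sits in closing the loop $(8)\Rightarrow(1)$, and I expect this to be the main obstacle. The difficulty is that ``$\phi_s$-$w$-{\rm fd}$_R(R/I)\le n$'' only tests $\Tor^R_{\bullet}(\,\cdot\,,R/I)$ against $\phi$-torsion modules, whereas $w\mbox{-}{\rm fd}_R(R/I)\le n$ tests against all modules; so one must upgrade the former to the latter, and this is precisely where the nonnil hypothesis on $I$ has to be exploited.

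My plan for this upgrade is as follows. Fix a (finite type) nonnil ideal $I$; by $(8)$ and Proposition \ref{w-phi-flat d}, $\Tor^R_{n+1}(R/I,T)$ is $\GV$-torsion for every $\phi$-torsion $T$. Given an arbitrary $M$, I would first factor out $\phi\mbox{-}tor(M)$: since $\phi\mbox{-}tor(M)$ is $\phi$-torsion its contribution to $\Tor^R_{n+1}(R/I,M)$ is $\GV$-torsion, and closure of the $\GV$-torsion class under subobjects, quotients and extensions reduces the problem to a $\phi$-torsion free module $M'$. Such an $M'$ embeds into its localization $M'_{\Nil(R)}=M_{\Nil(R)}$, and the cokernel is $\phi$-torsion (it vanishes after localizing at the prime $\Nil(R)$), so the associated long exact sequence reduces me to showing that $\Tor^R_{n+1}(R/I,M_{\Nil(R)})$ is $\GV$-torsion. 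At this last step I would in fact get outright vanishing: choosing a non-nilpotent $u\in I$ (possible precisely because $I$ is nonnil), multiplication by $u$ acts invertibly on the $R_{\Nil(R)}$-module $M_{\Nil(R)}$ while acting as $0$ on $R/I$, hence it is simultaneously an isomorphism and the zero map on every $\Tor^R_j(R/I,M_{\Nil(R)})$, forcing these groups to be $0$. Thus $\Tor^R_{n+1}(R/I,M)$ is $\GV$-torsion for all $M$, i.e.\ $w\mbox{-}{\rm fd}_R(R/I)\le n$, and the formula from the first paragraph then gives $\phi$-$w$-{\rm w.gl.dim}$(R)\le n$, closing the cycle.
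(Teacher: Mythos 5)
Your proof is correct, and the comparison with the paper is somewhat lopsided: the paper's entire justification for this theorem is the sentence preceding it (``can be deduced easily by Proposition \ref{w-phi-flat d}''), so the skeleton you describe for $(1)$--$(6)$ and the displayed formulas --- read $(1)\Leftrightarrow(6)$ as the definition, apply Proposition \ref{w-phi-flat d} modulewise, use the symmetry of $\Tor$ and a free-syzygy shift to pass between the $k=1$ and $k>0$ forms, and read $(4)$ for a fixed $I$ as $w\mbox{-}{\rm fd}_R(R/I)\leq n$ --- is exactly what the authors intend. Where you genuinely add something is $(8)\Rightarrow(1)$: as you observe, $(8)$ only bounds $\Tor^R_{n+k}(T,R/I)$ for $\phi$-torsion $T$, whereas closing the cycle needs $\Tor^R_{n+k}(M,R/I)$ to be $\GV$-torsion for arbitrary $M$, i.e.\ $w\mbox{-}{\rm fd}_R(R/I)\leq n$ rather than merely $\phi_s$-$w$-${\rm fd}_R(R/I)\leq n$, and this upgrade is not a formal consequence of Proposition \ref{w-phi-flat d}. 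Your argument for it is sound: $\phi$-$tor(M)$ and the cokernel of the embedding of $M/\phi$-$tor(M)$ into its localization at $\Nil(R)$ are both $\phi$-torsion, so $(8)$ controls their contributions via the long exact sequences, and $\Tor^R_j(R/I,M_{\Nil(R)})=0$ because a non-nilpotent $u\in I$ acts invertibly on the $R_{\Nil(R)}$-module and as zero on $R/I$. The $\NP$ hypothesis enters exactly where it must: $R\setminus\Nil(R)$ is multiplicatively closed, nonnil ideals contain non-nilpotent elements (so $\phi$-torsion modules vanish under this localization and $M/\phi$-$tor(M)$ embeds), and $\GV$-torsion modules are closed under subquotients and extensions. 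This is precisely the step the paper elides, and it is also what legitimizes the appearance of $w\mbox{-}{\rm fd}_R(R/I)$, rather than $\phi_s$-$w$-${\rm fd}_R(R/I)$, in the two closing formulas.
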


\begin{proposition}\label{ideal-1}
Let $D$ be an integral domain, $Q$ its quotient field and $V$ a $Q$-linear space. Then $\phi$-$w$-{\rm w.gl.dim}$(D(+)V)=w$-{\rm w.gl.dim}$(D)$.
\end{proposition}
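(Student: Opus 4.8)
The plan is to reduce the computation over $R:=D(+)V$ to the quotient domain $D=R/\Nil(R)$ by combining the formula of Theorem~\ref{w-g-flat} with a change-of-rings comparison. Set $N:=\Nil(R)$. Since $D$ is a domain with quotient field $Q$ and $V$ is a $Q$-space, one checks $N=0(+)V$, $N^2=0$, and that $R$ is a (strongly) $\phi$-ring with $R/N\cong D$; let $\rho\colon R\to D$ be the projection. First I would record the structural fact that every nonnil ideal $I$ of $R$ contains $N$ (as $N$ is divided prime and $I$ contains a non-nilpotent element), so $\bar I:=I/N$ is a nonzero ideal of $D$ with $R/I\cong D/\bar I$ as $R$-modules, and $\bar I$ runs over all nonzero ideals of $D$. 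By Theorem~\ref{w-g-flat},
\[
\phi\mbox{-}w\mbox{-}{\rm w.gl.dim}(R)=\sup\{\,\wfd_R(R/I)\mid I\ \mbox{nonnil}\,\}=\sup\{\,\wfd_R(D/J)\mid 0\neq J\subseteq D\,\},
\]
whereas $w\mbox{-}{\rm w.gl.dim}(D)=\sup\{\wfd_D(D/J)\mid 0\neq J\subseteq D\}$ (the zero ideal contributes $0$). So everything reduces to the single comparison $\wfd_R(D/J)=\wfd_D(D/J)$ for each nonzero ideal $J$ of $D$.

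For this I would pass to maximal $w$-ideals. The maximal $w$-ideals of $R$ are of the form $\m=\bar\m(+)V$ with $\bar\m\in w\mbox{-}\Max(D)$, and $R_\m\cong D_{\bar\m}(+)V$, while for the $D$-module $D/J$ one has $(D/J)_\m=(D/J)_{\bar\m}=D_{\bar\m}/J_{\bar\m}$. Using that $w$-flat dimension is detected at maximal $w$-ideals (so $\wfd_R(M)=\sup_\m\fd_{R_\m}(M_\m)$, and likewise over $D$) and that $\GV$-torsion is detected there (\cite[Theorem 6.2.15]{fk16}), the global comparison follows from the local statement: if $A$ is a local domain with quotient field $Q$, $R'=A(+)V$, and $\bar J$ a nonzero ideal of $A$, then $\fd_{R'}(A/\bar J)=\fd_A(A/\bar J)$, where $A/\bar J$ is viewed as an $R'$-module through $\pi\colon R'\to A$.

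The crux is this local equality, and the decisive point is that $M':=A/\bar J$ is a \emph{torsion} $A$-module (since $\bar J\neq 0$, $Q\otimes_A M'=0$). I would first show $\Tor^{R'}_q(A,M')=0$ for all $q\geq 1$. From $0\to N'\to R'\to A\to 0$ with $N'=0(+)V\cong V$ (an $R'$-module through $\pi$, hence a $Q$-space), the long exact sequence gives $\Tor^{R'}_1(A,M')=N'\otimes_{R'}M'=V\otimes_A M'=\bigoplus(Q\otimes_A M')=0$ and $\Tor^{R'}_q(A,M')\cong\Tor^{R'}_{q-1}(V,M')$ for $q\geq 2$; writing $V=\bigoplus Q$ and $Q=\varinjlim(A\xrightarrow{\,\cdot a\,}A)$ yields $\Tor^{R'}_{q-1}(V,M')\cong\bigoplus\big(Q\otimes_A\Tor^{R'}_{q-1}(A,M')\big)$, so an immediate induction gives the vanishing. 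Consequently the change-of-rings spectral sequence $\Tor^A_p(\Tor^{R'}_q(M',A),N)\Rightarrow\Tor^{R'}_{p+q}(M',N)$ collapses, giving $\Tor^{R'}_n(M',N)\cong\Tor^A_n(M',N)$ for every $A$-module $N$. To pass from $A$-modules to arbitrary $R'$-modules $N$ I would use $N'^2=0$: the sequence $0\to N'N\to N\to N/N'N\to 0$ has both outer terms $A$-modules, so the long exact $\Tor^{R'}(M',-)$-sequence reduces the vanishing of $\Tor^{R'}_{\fd_A(M')+1}(M',-)$ to the $A$-module case, yielding $\fd_{R'}(M')\leq\fd_A(M')$; the reverse inequality is immediate from the collapse applied to $A$-modules.

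The main obstacle is precisely this local change-of-rings step, and within it the two features that make it work: the vanishing $\Tor^{R'}_{\geq 1}(A,\text{torsion})=0$, which relies essentially on $V$ being a $Q$-space (divisibility kills the torsion), and the reduction of an arbitrary test module to $A$-modules via $N'^2=0$. This is also where the restriction to \emph{nonnil} ideals is indispensable: the excluded module $R/N\cong D$ is not torsion and can have infinite flat dimension over $R$, so the torsion hypothesis cannot be dropped. The surrounding bookkeeping---the correspondence $w\mbox{-}\Max(R)\leftrightarrow w\mbox{-}\Max(D)$ with $R_\m\cong D_{\bar\m}(+)V$, the locality of $w$-flat dimension, and the defining formula for $w\mbox{-}{\rm w.gl.dim}(D)$---I would quote as routine, together with the trivial case in which $D$ is a field (both sides being $0$).
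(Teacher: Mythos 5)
Your argument is correct, but it takes a genuinely different route from the paper's. The paper goes \emph{up} the flat ring extension $D\to R=D(+)V$: since $V$ is a $Q$-space, $R=D\oplus V$ is a flat $D$-module and $D/I\otimes_DR\cong R/(I(+)V)$ (using $IV=V$), so a single application of flat base change \cite[Proposition 4.1.2]{CE56} gives $\Tor^R_k(R/J,M)\cong\Tor^D_k(D/I,M)$ for any $R$-module $M$; choosing $M$ appropriately in each direction and transferring $\GV$-torsion between $D$ and $R$ via \cite[Proposition 1.2]{CK17}, both inequalities drop out with no localization and no spectral sequence. You instead go \emph{down} the surjection $R\to D$: after reducing to the comparison of $\wfd_R(D/J)$ with $\wfd_D(D/J)$ and localizing at corresponding maximal $w$-ideals, you establish the local change-of-rings isomorphism $\Tor^{R'}_n(A/\bar J,-)\cong\Tor^A_n(A/\bar J,-)$ by first killing $\Tor^{R'}_{\geq 1}(A,A/\bar J)$ (divisibility of $V$ against the torsion module $A/\bar J$) and then passing from $A$-modules to arbitrary test modules via $N'^2=0$. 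This is sound --- the auxiliary facts you quote as routine (the correspondence of maximal $w$-ideals of $D(+)V$ with those of $D$, locality of $w$-flat dimension, the $\sup$ formula for $w$-{\rm w.gl.dim}$(D)$) are all available in the cited literature --- but it is considerably longer, and the surjection $R\to D$ is the harder direction precisely because $D$ is not flat over $R$. What your route buys is an explanation of why the nonnil restriction is indispensable (the excluded module $R/\Nil(R)\cong D$ is not torsion and has nonvanishing higher $\Tor$ against itself over $R$) together with the finer local statement $\fd_{R_\m}((D/J)_\m)=\fd_{D_{\bar\m}}((D/J)_{\bar\m})$; what the paper's route buys is brevity, since flatness of $R$ over $D$ does all the work globally in both directions.
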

\begin{proof}  Set $R=D(+)V$. Assume $w$-{\rm w.gl.dim}$(D)\leq n$. Let $M$ be an $R$-module, Then $M$ is naturally an $D$-module. Let $J$ be an nonnil ideal of $R$. Then  $J=I(+)V$ with $I$ a nonzero $D$-ideal. Note that  $R$ is a flat $D$-module. By \cite[Proposition 4.1.2]{CE56} we have $$\Tor_{n+1}^R(R/J,M)\cong\Tor_{n+1}^R(D/I\otimes_DR,M)\cong \Tor_{n+1}^D(D/I,M)$$ is $\GV$-torsion $D$-module. By \cite[Proposition 1.2]{CK17}, it is also a $\GV$-torsion $R$-module. So $\phi$-$w$-{\rm w.gl.dim}$(D(+)V)\leq w$-{\rm w.gl.dim}$(D)$.

On the other hand, assume $\phi$-$w$-{\rm w.gl.dim}$(R)\leq m$. Suppose $N$ is an $D$-module. For any $(a,b)\in R$ and $t\in N$, Define $(a,b)t=at$. Then $N$ is naturally an $R$-module. Suppose $I$ is a nonzero $D$-ideal, then $J=I(+)V$ is an  nonnil ideal of $R$. By \cite[Proposition 4.1.2]{CE56} again,
 $$\Tor_{m+1}^D(D/I,N)\cong\Tor_{m+1}^R(D/I\otimes_DR,N) \cong \Tor_{m+1}^R(R/J,N)=0$$   is $\GV$-torsion $R$-module. By \cite[Proposition 1.2]{CK17}, it is also a $\GV$-torsion $D$-module.
So $\phi$-$w$-{\rm w.gl.dim}$(D(+)V)\geq w$-{\rm w.gl.dim}$(D)$. Consequently $\phi$-$w$-{\rm w.gl.dim}$(D(+)V)=w$-{\rm w.gl.dim}$(D)$.
\end{proof}

Recall from  \cite{ZWT13} that a $\phi$-ring $R$ is said to be \emph{$\phi$-von Neumann regular} provided that every $R$-module is $\phi$-flat. A $\phi$-ring $R$  is  $\phi$-von Neumann regular, if and only if there is an element $x\in R$ such that $a = xa^2$ for any non-nilpotent element $a\in R$, if and only if $R/\Nil(R)$ is a von Neumann regular ring, if and only if $R$ is zero-dimensional (see \cite[Theorem 4.1]{ZWT13}). Now, we give some more characterizations of $\phi$-von Neumann regular rings.

\begin{theorem}\label{w-g-flat-1}
Let $R$ be a $\phi$-ring. The following statements are equivalent for $R$:
\begin{enumerate}
    \item $\phi$-$w$-{\rm w.gl.dim}$(R)=0$;
    \item  $R$  is  $\phi$-von Neumann regular;
    \item  $a\in (a^2)_w$ for any non-nilpotent element $a\in R$;
    \item $w$-$dim(R)=0$ ;
    \item $dim(R)=0$;
     \item every $R$-module is $\phi$-$w$-flat;
    \item every $R$-module is strongly $\phi$-$w$-flat;
\end{enumerate}
\end{theorem}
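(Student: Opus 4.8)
The plan is to run a circle of implications that splits the seven conditions into a homological cluster $\{(1),(6),(7)\}$, a dimension cluster $\{(2),(4),(5)\}$, and the pivotal $w$-regularity condition $(3)$ that bridges the two.

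First I would settle the homological cluster. An $R$-module $M$ is strongly $\phi$-$w$-flat exactly when its strongly $\phi$-$w$-flat dimension is $0$, so by the definition of $\phi$-$w$-{\rm w.gl.dim} together with Theorem \ref{w-g-flat} (the case $n=0$ of the equivalence of its conditions $(1)$ and $(6)$), statement $(1)$ is literally ``every $R$-module is strongly $\phi$-$w$-flat'', i.e. $(7)$; and $(7)\Rightarrow(6)$ is immediate from Remark \ref{flat-nots}. For $(6)\Rightarrow(7)$ I would use dimension shifting exactly as in Proposition \ref{w-phi-flat d}: taking $0\to K\to F\to M\to 0$ with $F$ flat gives $\Tor^R_{n+1}(T,M)\cong\Tor^R_n(T,K)$ for every $\phi$-torsion $T$, and since under $(6)$ each syzygy $K$ is again $\phi$-$w$-flat, an induction lifts $\GV$-torsionness to all degrees $n\geq 1$.

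Next I would link the clusters. The implication $(2)\Rightarrow(6)$ is free, since a $\phi$-von Neumann regular ring has $\Tor^R_1(T,M)=0$ for $\phi$-torsion $T$, which is a fortiori $\GV$-torsion. The heart of the argument is $(6)\Rightarrow(3)$: for a non-nilpotent $a$ the module $R/(a)$ is $\phi$-torsion and, by $(6)$, also $\phi$-$w$-flat, so $\Tor^R_1(R/(a),R/(a))$ is $\GV$-torsion; using the standard isomorphism $\Tor^R_1(R/I,R/J)\cong (I\cap J)/IJ$ this Tor module is the cyclic module $(a)/(a^2)=R\bar a$, whence some $K\in\GV(R)$ kills $\bar a$, i.e. $Ka\subseteq(a^2)$, which is exactly $a\in(a^2)_w$. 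For the return trip $(3)\Rightarrow(2)$, together with the arrow $(4)\Rightarrow(2)$ needed to close the dimension cluster, I would localize. Fix $\mathfrak m\in w$-$\Max(R)$; since a $\GV$-ideal survives to the unit ideal in $R_\mathfrak m$, the relation $Ka\subseteq(a^2)$ localizes to $\tfrac a1\in(\tfrac a1)^2R_\mathfrak m$, so $\tfrac a1\bigl(1-t\tfrac a1\bigr)=0$ for some $t$. As $R_\mathfrak m$ is a local $\phi$-ring, $(0:\tfrac a1)\subseteq\Nil(R_\mathfrak m)$ when $\tfrac a1$ is non-nilpotent, so $1-t\tfrac a1$ lies in the nilradical, hence in the Jacobson radical, forcing $\tfrac a1$ to be a unit; thus every non-nilpotent element of $R_\mathfrak m$ is a unit, $\Nil(R_\mathfrak m)=\mathfrak m R_\mathfrak m$, and $R_\mathfrak m$ is zero-dimensional. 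Since condition $(4)$ supplies this same zero-dimensionality of $R_\mathfrak m$ directly, passing in either case to the domain $R/\Nil(R)$—whose localizations at maximal $w$-ideals are then fields—shows $R/\Nil(R)$ is a field, i.e. $R$ is $\phi$-von Neumann regular. The remaining arrows are routine: $(2)\Leftrightarrow(5)$ is the recalled characterization, and $(5)\Rightarrow(4)$ follows from the general inequality $w$-$\dim(R)\leq\dim(R)$ (every prime $w$-ideal being a prime ideal).

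I expect the main obstacle to be precisely this $w$-versus-$\phi$ interface in the last paragraph. One must justify carefully that $w$-dimension zero—a statement only about $w$-ideals—descends through the minimal prime $\Nil(R)$ to force the domain $R/\Nil(R)$ to be a genuine field, and that $\GV$-ideals behave correctly under both localization at maximal $w$-ideals and passage to the quotient $R/\Nil(R)$ (so that $a\in(a^2)_w$ over $R$ is faithfully reflected locally and in the quotient). The homological steps, by contrast, are formal once Proposition \ref{w-phi-flat d} and Theorem \ref{w-g-flat} are in hand.
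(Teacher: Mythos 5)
Your homological arrows are sound and essentially formal: $(1)\Leftrightarrow(7)$ is the definition of $\phi$-$w$-{\rm w.gl.dim}, $(7)\Rightarrow(6)$ is trivial, $(6)\Rightarrow(7)$ follows by dimension shifting along a flat resolution, $(2)\Rightarrow(6)$ because $\phi$-flat implies $\phi$-$w$-flat, and your computation $\Tor^R_1(R/(a),R/(a))\cong (a)/(a^2)$ for $(6)\Rightarrow(3)$ is correct (with $R/(a)$ indeed $\phi$-torsion since $(a)$ is nonnil). The localization in $(3)\Rightarrow(2)$ also correctly shows that $R_{\fkm}$ is zero-dimensional local with maximal ideal $\Nil(R_{\fkm})$ for every $\fkm\in w$-$\Max(R)$, i.e.\ that every maximal $w$-ideal of $R$ equals $\Nil(R)$. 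But the step you yourself flag as ``the main obstacle'' is a genuine gap, and it is load-bearing: without it neither $(3)\Rightarrow(2)$ nor $(4)\Rightarrow(2)$ closes, and then nothing in your diagram implies $(2)$ or $(5)$, so the seven conditions are not shown equivalent. Concretely, zero-dimensionality of $R_{\fkm}$ for all $\fkm\in w$-$\Max(R)$ (equivalently $w$-$\dim(R)=0$) does not formally yield $\dim(R)=0$: primes $P\supsetneq\Nil(R)$ that are not $w$-ideals are invisible to $w$-$\Max(R)$, and the phrase ``localizations of $R/\Nil(R)$ at maximal $w$-ideals are fields'' conflates $w$-$\Max(R)$ with $w$-$\Max(R/\Nil(R))$ --- the $w$-operation does not pass through the quotient for free. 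What is actually needed is a transfer statement for $\GV$-ideals: from $Ja\subseteq(a^2)$ one gets $\bar J\bar a\subseteq(\bar a^2)$, hence $\bar J\subseteq(\bar a)$ in the domain $D=R/\Nil(R)$, and one must know that the image $\bar J$ has $w$-closure $D$ in order to conclude $(\bar a)=(\bar a)_w=D$ and hence that $D$ is a field. That transfer between $\GV(R)$ and $\GV(R/\Nil(R))$ (or $\GV(\phi(R))$) is precisely the nontrivial content of \cite[Theorem 3.1]{ZW21}, built on \cite{kf12}; it is also exactly what the paper invokes, since its entire proof of $(1)$--$(6)$ is that citation, with only the trivial $(1)\Leftrightarrow(7)$ added.

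To repair the argument, either prove the transfer of $\GV$-ideals along $R\to R/\Nil(R)$ for $\phi$-rings (so that $a\in(a^2)_w$ in $R$ descends to $\bar a\in(\bar a^2)_w=(\bar a^2)$ in $D$), or simply cite \cite[Theorem 3.1]{ZW21} for the implications $(3)\Rightarrow(2)$ and $(4)\Rightarrow(5)$; everything else in your proposal then goes through.
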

\begin{proof}
The equivalence of $(1)-(6)$ follows from \cite[Theorem 3.1]{ZW21}.  $(1)\Leftrightarrow (7)$ is trivial.
\end{proof}

Recall that a ring $R$ is said to be a $\PvMR$ if  every finitely generated regular ideal $I$ is $w$-invertible, i.e., $(II^{-1})_w=R$,  or equivalently, there is a fractional ideal $J$ of $R$ such that $(IJ)_w=R$.  The author in \cite{Z22} give a homological characterization of $\PvMR$s, that is, a ring   $R$ is a $\PvMR$ if and only if  every finitely generated regular ideal is $w$-projective, if and only if   any submodule of a regular $w$-flat module is  regular $w$-flat, if and only if    any regular ideal of $R$ is  $w$-flat, if and only if     $reg$-$w$-w.gl.dim$(R)\leq 1$.

Let $R$ be a $\phi$-ring. Following \cite{A01}, a $\phi$-ring  $R$ is said to be a \emph{$\phi$-chain ring} ($\phi$-CR for short) if for any $a,b\in R-\Nil(R)$, either $a|b$ or $b|a$ in $R$.  Recall from \cite{kf12} that a nonnil ideal $J$ of $R$ is said to be a \emph{$\phi$-$\GV$-ideal} (resp., \emph{$\phi$-$w$-ideal}) of $R$ if $\phi(J)$ is a $\GV$-ideal (resp., $w$-ideal) of $\phi(R)$. A $\phi$-ring $R$ is called a \emph{$\phi$-\SM\ ring} if it satisfies the ACC on $\phi$-$w$-ideals. An ideal $I$ of $R$ is  \emph{$\phi$-$w$-invertible} if $(\phi(I)\phi(I)^{-1})_W=\phi(R)$ where $W$ is the $w$-operation of $\phi(R)$. Recall from \cite[Definition 3.2]{ZW21} that a $\phi$-ring $R$ is said to be a \emph{$\phi$-\Prufer\ $v$-multiplication ring} ($\phi$-$\PvMR$ for short) provided that any finitely generated nonnil ideal is $\phi$-$w$-invertible. It was proved in \cite[Theorem 3.3]{ZW21} that a  $\phi$-ring $R$ is a $\phi$-$\PvMR$ if and only if    $R_{\fkm}$ is a $\phi$-CR for any $\fkm\in w$-$Max(R)$, if and only if    $R/\Nil(R)$ is a $\PvMD$, if and only if    $\phi(R)$ is a $\PvMR$. The author in \cite{ZW21} also gave a  homological characterization of $\phi$-$\PvMR$s in term of $\phi$-$w$-flat modules when $R$ is a strongly $\phi$-ring. Now we characterize $\phi$-\Prufer\ multiplication strongly $\phi$-rings in terms of strongly $\phi$-$w$-flat modules.

\begin{theorem}\label{w-g-flat-1}
Let $R$ be a  $\phi$-ring. The following statements are equivalent for $R$:
\begin{enumerate}
   \item $R$ is a strongly $\phi$-ring and a  $\phi$-$\PvMR$;
   \item  $R$ is a $\phi$-$w$-w.gl.dim$(R)\leq 1$;
   \item every submodule of a  strongly  $\phi$-$w$-flat module is strongly  $\phi$-$w$-flat;
   \item every submodule of a $w$-flat module is strongly  $\phi$-$w$-flat;
   \item every submodule of a flat module is  strongly  $\phi$-$w$-flat;
   \item every  ideal of $R$ is  strongly  $\phi$-$w$-flat;
   \item every  nonnil ideal of $R$ is  $w$-flat;
   \item  every finite type nonnil ideal of $R$ is $w$-flat.
\end{enumerate}
\end{theorem}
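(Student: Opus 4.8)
The plan is to prove the equivalences in two stages: first showing that (2)–(8) are mutually equivalent \emph{without} any appeal to the strongly $\phi$ hypothesis, and then separately connecting this block to (1). Two preliminary observations drive the first stage. Every flat module is strongly $\phi$-$w$-flat, since $\Tor_n^R(-,F)=0$ for flat $F$ and all $n\geq 1$; and every $w$-flat module is strongly $\phi$-$w$-flat, since $w$-fd$_R(F)=0$ forces $\Tor_n^R(F,-)$ to be $\GV$-torsion for every $n\geq 1$ (in particular against $\phi$-torsion modules). These inclusions, flat $\subseteq$ $w$-flat $\subseteq$ strongly $\phi$-$w$-flat, make (3)$\Rightarrow$(4)$\Rightarrow$(5)$\Rightarrow$(6) immediate (ideals sit inside the flat module $R$), and they reduce the real content of the chain to (2)$\Rightarrow$(3) and (6)$\Rightarrow$(2).

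For (2)$\Rightarrow$(3), given $N\subseteq M$ with $M$ strongly $\phi$-$w$-flat, I would feed the short exact sequence $0\to N\to M\to M/N\to 0$ into the long exact $\Tor$ sequence against a $\phi$-torsion module $T$. Since $\phi$-$w$-w.gl.dim$(R)\leq 1$ makes $\Tor^R_{n+1}(M/N,T)$ $\GV$-torsion by Theorem \ref{w-g-flat}(2), and $\Tor^R_n(M,T)$ is $\GV$-torsion for $n\geq 1$, the squeezed middle term $\Tor^R_n(N,T)$ is $\GV$-torsion, so $N$ is strongly $\phi$-$w$-flat. For (6)$\Rightarrow$(2), any nonnil ideal $I$ is strongly $\phi$-$w$-flat by (6), so $0\to I\to R\to R/I\to 0$ gives $\phi_s$-$w$-fd$_R(R/I)\leq 1$ by Proposition \ref{spd-ext}(1), whence $\phi$-$w$-w.gl.dim$(R)\leq 1$ by Theorem \ref{w-g-flat}. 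Finally (2)$\Leftrightarrow$(7)$\Leftrightarrow$(8) follows from the dimension formula in Theorem \ref{w-g-flat} together with the dimension shift along $0\to I\to R\to R/I\to 0$: as $R$ is flat, $w$-fd$_R(R/I)\leq 1$ if and only if $I$ is $w$-flat.

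The substantive part is the equivalence of (1) with this block, and the crux is extracting the strongly $\phi$ (equivalently $\ZN$) property. For (7)$\Rightarrow$(1) I would first prove $R$ is a $\ZN$-ring. Let $a$ be non-nilpotent; then $aR$ is a nonnil ideal, hence $w$-flat by (7), so $aR_{\fkm}$ is flat over $R_{\fkm}$ for every $\fkm\in w$-$\Max(R)$. Because $\ann_R(a)\subseteq\Nil(R)\subseteq\fkm$ for each such $\fkm$, the element $a/1$ is nonzero in $R_{\fkm}$, so the cyclic flat module $aR_{\fkm}\cong R_{\fkm}/\ann_{R_{\fkm}}(a/1)$ is finitely generated flat over the local ring $R_{\fkm}$, hence free, forcing $\ann_{R_{\fkm}}(a/1)=(\ann_R a)_{\fkm}=0$. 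As this holds at every maximal $w$-ideal, $\ann_R(a)$ is $\GV$-torsion by \cite[Theorem 6.2.15]{fk16}; being a submodule of the $\GV$-torsion-free module $R$ it is also $\GV$-torsion-free, so $\ann_R(a)=0$. Thus every non-nilpotent element is a non-zero-divisor, $\Z(R)=\Nil(R)$, and $R$ is a strongly $\phi$-ring.

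With the $\ZN$ property secured, the remaining half of (7)$\Rightarrow$(1) and the converse (1)$\Rightarrow$(2) become symmetric. Over a strongly $\phi$-ring, Theorem \ref{sp-ss} identifies $\phi$-$w$-flat and strongly $\phi$-$w$-flat modules, so conditions (3)–(8) coincide with the homological characterization of $\phi$-$\PvMR$s in terms of $\phi$-$w$-flat modules established for strongly $\phi$-rings in \cite{ZW21}; equivalently, via \cite[Theorem 3.3]{ZW21}, each $R_{\fkm}$ is a $\phi$-CR, and $w$-flatness of nonnil ideals localizes to flatness of nonzero ideals of the valuation domain $R_{\fkm}/\Nil(R_{\fkm})$. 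Hence $R$ strongly $\phi$ together with any of (2)–(8) is equivalent to $R$ being a strongly $\phi$ $\phi$-$\PvMR$, which is (1). I expect the $\ZN$ step to be the main obstacle: the weaker hypothesis that every $\phi$-$w$-flat module be strongly $\phi$-$w$-flat only yields $\ZN$ under the finiteness assumptions of Theorem \ref{op-fn}, whereas here the genuinely stronger input that nonnil ideals are $w$-flat (flat at \emph{every} maximal $w$-ideal) is precisely what removes those assumptions; the care needed lies in verifying $a/1\neq 0$ locally and in exploiting the $\GV$-torsion versus $\GV$-torsion-free dichotomy.
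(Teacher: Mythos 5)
Your proposal is correct, and its overall architecture (an easy chain among (2)--(8), an appeal to the homological characterization of $\phi$-$\PvMR$s over strongly $\phi$-rings from \cite{ZW21} together with Theorem \ref{sp-ss}, and a self-contained argument extracting the $\ZN$ property) matches the paper's; the one genuinely non-trivial step, however, is handled by a different argument. To show that $w$-flatness of nonnil ideals forces $\Z(R)=\Nil(R)$, the paper argues by contradiction with a hypothetical $r\neq 0$ killing a non-nilpotent $a$: it computes $\Tor_1^R(R/Ra,Rr)\cong Rr/(0:_Ra)Rr$ via Hattori's formula, extracts $J\in\GV(R)$ and a finitely generated $B\subseteq(0:_Ra)\subseteq\Nil(R)$ with $Jr\subseteq RBr$, and iterates to get $J^mr\subseteq RB^mr=0$ using that finitely generated nil ideals are nilpotent, contradicting the $\GV$-torsion-freeness of $R$. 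You instead localize: $(aR)_{\fkm}$ is a nonzero cyclic flat module over the local ring $R_{\fkm}$, hence free of rank one, so $(0:_Ra)_{\fkm}=(0:_{R_{\fkm}}\frac{a}{1})=0$ for every $\fkm\in w$-$\Max(R)$, whence $(0:_Ra)$ is simultaneously $\GV$-torsion and $\GV$-torsion-free and therefore zero. Both arguments are valid; yours trades Hattori's formula and the nilpotency trick for the standard fact that finitely generated flat modules over a local ring are free (together with exactness of localization to identify the local annihilator), and it isolates more cleanly where the $\phi$-ring hypothesis enters, namely in guaranteeing $(0:_Ra)\subseteq\Nil(R)\subseteq\fkm$ so that $a/1\neq 0$ locally. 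As a side benefit, your route avoids a small imprecision in the paper's proof, which asserts that $Rr$ is $w$-flat even though $Rr$ is a nil ideal to which (7)/(8) do not directly apply (what is actually needed there is that $Rr\subseteq R$ is strongly $\phi$-$w$-flat via (2)$\Rightarrow$(3) and that $R/Ra$ is $\phi$-torsion).
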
\

\begin{proof} Let $R$ be a  $\phi$-ring. Note that $(1)\Rightarrow (2)$ and $(8)\Rightarrow (2)$  follows by \cite[Theorem 3.3]{ZW21}.

$(2)\Leftrightarrow (3)\Rightarrow (4)\Rightarrow (5)\Rightarrow (6)\Rightarrow (7)\Rightarrow (8)$ is trivial.

$(8)\Rightarrow (1)$ It follows by  \cite[Theorem 3.3]{ZW21} that  we only need to show $R$ is a strongly $\phi$-ring. Let  $a$ be a zero-divisor in  $R$. Then there exists $0\not=r\in R$ satisfying $ar=0$. On contrary, suppose $a$ is not nilpotent. Since $Rr$ is $w$-flat, we have $\Tor_1^R(R/Ra,Rr)\cong \{xr\in Rr\mid axr = 0\}/(0:_Ra)Rr$ is $\GV$-torsion (see \cite[Proposition 1]{H60}). Since $ar=0$, there exists $J\in \GV(R)$ such that $Jr\subseteq (0:_Ra)Rr$. Since $J$ is finitely generated, there exists a finitely generated $R$-ideal $B\subseteq  (0:_RRa)$ satisfying $Jr\subseteq RBr$.
Since $R$ is a $\phi$-ring and $a$ is not nilpotent, we have $B$ is a nilpotent ideal. Assume that $B^m=0$. Then $$J^mr=J^{m-1}(Jr)\subseteq J^{m-2}J(RBr)\subseteq J^{m-3}J(RB^2r)\subseteq \cdots \subseteq J(RB^{m-1}r)\subseteq RB^mr=0.$$
Note that $J^m$ is a $\GV$-ideal, and so $(0:_RJ^m)=0$. Hence $r=0$, which is a contradiction.
\end{proof}

\begin{remark}\label{infty example}
Let $R$ be the ring in Example \ref{flat-nots}.  Then $R$ is both a $\DW$-ring and a $\phi$-\Prufer\ ring. So $R$ is a  $\phi$-$\PvMR$s.
It follows by \cite[Example 2.11]{ZW22-strong phi-dim} that  the $\phi$-weak global dimension of $R$ is also infinite. And so $\phi$-$w$-{\rm w.gl.dim}$(R)=\infty$. Note that the $\phi$-ring $R$ is not a strongly  $\phi$-ring. At last, we propose the following conjecture:

\begin{question}
    If $R$ is a strongly $\phi$-ring, then
    \begin{center}
    $\phi$-$w$-{\rm w.gl.dim}$(R)=w$-{\rm w.gl.dim}$(R/\Nil(R))$.
    \end{center}
\end{question}
\end{remark}

\begin{acknowledgement}\quad\\
The first author was supported by the National Natural Science Foundation of China (No. 12201361). The third author was supported by the National Natural Science Foundation of China (No. 12061001)
\end{acknowledgement}

\bigskip

\end{document}